\documentclass{article}

\usepackage{CAAI_preprint}

\title{\textbf{Dynamics of a modified Leslie--Gower predator-prey model with Allee effect on the prey and a generalist predator}}

\author{
Claudio Arancibia--Ibarra \\
 School of Mathematical Sciences, Queensland University of Technology\\
GPO Box 2434, GP Campus, Brisbane, Queensland 4001 Australia\\
 Facultad de Ingeniería y Negocios, Universidad de Las Américas \\
 Av. Manuel Montt 948, Santiago, Chile \\
  \texttt{claudio.arancibia@hdr.qut.edu.au} \\
  \AND
Jos\'e Flores \\
Department of Mathematics, The University of South Dakota (USD)\\
Vermillion, South Dakota, USA\\
 \texttt{Jose.Flores@usd.edu} \\  
}

\begin{document}
\maketitle
\begin{abstract} \normalsize

A predator-prey model with functional response Holling type II, Allee effect in the prey and a generalist predator is considered. It is shown that the model with strong Allee effect has at most two positive equilibrium point in the first quadrant, one is always a saddle point and the other exhibits multi-stability phenomenon since the equilibrium point can be stable or unstable. While the model with weak Allee effect has at most three positive equilibrium point in the first quadrant, one is always a saddle point and the other two can be stable or unstable node. In addition, when the parameters vary in a small neighbourhood of system parameters the model undergoes to different bifurcations, such as saddle-node, Hopf and Bogadonov--Takens bifurcations. Moreover, numerical simulation is used to illustrate the impact in the stability of positive equilibrium point(s) by adding an Allee effect and an alternative food source for predators.

\end{abstract}

\keywords{ Leslie--Gower model\and Allee effect\and Holling type II\and Alternative food\and Numerical simulation\and Bifurcations.}

\section{Introduction}

In this manuscript a modified Leslie--Gower predator-prey model proposed by Leslie and Gower~\cite{leslie2} and modified by May~\cite{may} is studied. The model is described by an autonomous two-dimensional system of ordinary differential equation where the equations for the predator and the prey are a logistic-type growth function~\cite{aguirre1, flores2, turchin, qiao}, the functional response is a Holling type II functional response, sometimes called a hyperbolic functional response. The Holling type II functional response occurs in species when the number of prey consumed rises rapidly at the same time as the prey density increases~\cite{dale}. This functional response is represented by $qx/\left(x+a\right)$ where the parameter $q$ is the maximum predation rate per capita and $a$ is the population value at which the predator function is one half of the saturated level. The following pair of equations is a general representation of the model where $x\left(t\right)$ is used to represent the size of the prey population at time $t$, and $y\left(t\right)$ is used to represent the size of the predator population at time $t$; 
\begin{equation}\label{eq1}
\begin{aligned}
\dfrac{dx}{dt} &=	 rx\left(1-\dfrac{x}{K} \right)  - \dfrac{qxy}{x+a}\,,  \\
\dfrac{dy}{dt} &=	 sy\left(1-\dfrac{y}{nx} \right) \,. 
\end{aligned}
\end{equation}
Here $r$ and $s$ are the intrinsic growth rate for the prey and predator respectively, $n$ is a measure of the quality of the prey as food for the predator and $K$ is the prey environmental carrying capacity. 

Predator-prey model studied in~\cite{arancibia,aziz,feng,singh} are known as modified Leslie--Gower models. In these models the predator acts as a generalist since it avoids extinction by utilising an alternative source of food. In the case of severe prey scarcity, some predator species can switch to another available food, although its population growth may still be limited by the fact that its preferred food is not available in abundance. The Leslie--Gower predator-prey model also considers the case of a specialist predator~\cite{korobei}. It is assumed that a reduction in a predator population has a reciprocal relationship with per capita availability of its favourite food~\cite{aziz}. Nevertheless, when the alternative food is positive the modified Leslie--Gower model does not have these abnormalities and it enhances the predictions about the interactions. This model was proposed in~\cite{aziz}, but the model was only analysed partially. Using a Lyapunov function~\cite{korobei}, the global stability of a unique positive equilibrium point was shown. The alternative food for predator can be modelled by adding a positive constant $c$ to the environmental carrying capacity for the predator~\cite{aziz}. Therefore, I have a modification to the logistic growth term in the predator equation, namely $nx$ is replaced in~\eqref{eq1} by $nx+c$.

On the other hand, additional complexity can be incorporated by consider, for example, Allee effect. The Allee effect is defined as the relation between population size and fitness. The lower the population size, the lower the fitness~\cite{allee,berec,stephens}. This phenomenon has also been called depensation in fisheries sciences and indicates a positive density dependence in population dynamics~\cite{liermann}. The Allee effect may appear due to a wide range of biological phenomena, such as reduced anti-predator vigilance, social thermo-regulation, genetic drift, mating difficulty, reduced defence against the predator, and deficient feeding because of low population densities~\cite{stephens2}. The influence of the Allee effect upon the logistic-type growth in the prey equation is represented by the inclusion of a multiplier in the form of $x-m$ where $m$ is the minimum viable population or Allee threshold. For $0<m<K$, the per-capita grow rate of the prey population with the Allee effect included is negative, but increasing, for $N\in[0,m)$, and this is referred to as the strong Allee effect.  When $m\leq0$, the per-capita growth rate is positive but increases at low prey population densities and this is referred to as the weak\footnote{Note that $m=0$ is often also called the weak Allee effect, however, the behaviour of the model with $m=0$ is similar to the case of a strong Allee effect ($m>0$) and thus I will not consider the case of $m=0$ in this manuscript.} Allee effect~\cite{berec,courchamp2}. Additionally, the Allee effect can also refer to a decrease in the per-capita fertility rate at low population densities or a phenomenon in which fitness, or population growth, increases as population density increases~\cite{stephens, allee2, courchamp, kramer}.

The aim of this manuscript is to study the dynamic of the Leslie--Gower predator-prey model with functional response Holling type II, Allee effect on the prey and alternative food for predator. System~\eqref{eq2} was partially studied in~\cite{arancibia} in which the authors studied only the stability of the positive equilibrium point when the strong Allee effect on the prey. This manuscript extend the properties of the model proposed in~\cite{arancibia,arancibia8,arancibia3,gonzalez3,arancibia5,arancibia9}. Showing that while the different modifications of predator-prey models often lead to qualitatively similar dynamics, these models are sensitive to changes in their parameters. It has also shown that small changes to similar parameters in different models lead to different behaviours. For instance, the Leslie--Gower predator-prey model with strong Allee effect ($m>0$)~\cite{arancibia3}, weak Allee effect ($m=0$)~\cite{gonzalez3}, alternative food for the predator~\cite{arancibia5,arancibia9} and the model with both modifications at once~\cite{arancibia}, i.e. Allee effect and alternative food, can support the extinction and coexistence of the predator and/or the prey populations.

The modified model with both modifications at once is briefly described in Section~\ref{S1}. In Section~\ref{S2} I study the main properties of the Leslie--Gower predator-prey model with alternative food for the predator and strong Allee effect. I prove the stability of the equilibrium points and giving the conditions for a different type of bifurcations. While the case of weak Allee effect is studied in Section~\ref{S3}. Finally, in Section~\ref{S4} I discuss the results and giving the ecological association.

\section{The Model}\label{S1}
When the Allee effect on the prey and the alternative food for the predator is included in system~\eqref{eq1}. It becomes 
\begin{equation}\label{eq2}
\begin{aligned}
\dfrac{dx}{dt} & = rx\left( 1-\dfrac{x}{K}\right)\left(x-m\right) -\dfrac{qxy}{x+a}\,, \\ 
\dfrac{dy}{dt} & = sy\left( 1\ -\dfrac{y}{nx+c}\right) \,.
\end{aligned} 
\end{equation}
Here all the parameters are considered positive, i.e $\left(r,K,q,a,s,n,m,c\right)\in \mathbb{R}{}{}_{+}^{8}$, $a<K$ and system~\eqref{eq1} is a Kolmogorov type~\cite{freedman}. That is the coordinates axis are invariant and  is defined in the first quadrant $\Omega=\{\left(x,y\right)\in\mathbb{R}^2, x\geq0, y\geq0\} =\mathbb{R}^+_0 \times \mathbb{R}^+_0$.

The equilibrium points of the system~\eqref{eq2} with strong Allee effect ($m>0$) are $\left(K,0\right)$, $\left(m,0\right)$, $\left(0,0\right)$, $\left(0,c\right)$ and $\left(x^*,y^*\right)$ which is the intersection of the nullclines $$y=nx+c \quad \text{and} \quad y=\dfrac{r}{q}\left( 1-\dfrac{x}{K}\right)\left(x+a\right)\left(x-m\right).$$
Note that if $m<0$, then the equilibrium point $\left(m,0\right)$ is located on the negative half axis $x\leq0$.

I follow~\cite{blows, gonzalezy2, saez} and convert system~\eqref{eq2} to a  topologically equivalent model in order to simplify the analysis, 
\begin{equation}\label{eq3}
\begin{aligned}
\dfrac{du}{d\tau} & = u\left(u+C\right)\left(\left(u+A\right)\left( 1-u\right)\left(u-M\right) -Qv\right), \\
\dfrac{dv}{d\tau} & = S\left(u+A\right)\left(u-v+C\right)v.
\end{aligned}
\end{equation}
System~\eqref{eq3} is topologically equivalent to system~\eqref{eq2} except at the singularitie $x=-a$. I introduce a change of variable and time rescaling, given by the function $\varphi :\breve{\Omega}\times\mathbb{R}\rightarrow \Omega\times\mathbb{R}$, where $\varphi\left(u,v,\tau\right)=\left(x,y,t\right)$, defined by $x=Ku$, $y=nKv$, $d\tau =rK\,dt/u\left(u+a/K\right)$ and $\breve{\Omega}=\left(u,v\right)\in \mathbb{R}^2, u\geq0, v\geq0$. System~\eqref{eq3} is obtained upon defining $A:=a/K<1$, $C:=c/Kn$, $S:=s/rK$, $Q:=nq/rK$ and $M:=m/K$, so $\left(A,M,C,S,Q\right)\in\left(0,1\right)\times\left(0,1\right)\times \mathbb{R}^3_+$. The mapping $\varphi$ is a diffeomorphism~\cite{chicone} preserving the orientation of time since $\det\,D\varphi\left(u,v,\tau\right)=nu\left(a+Ku\right)/r>0$.

As system~\eqref{eq3} is a Kolmogorov type~\cite{freedman}, then the $u$-axis and  $v$-axis are invariant sets. Additionally, if $u=1$ I have $du/d\tau=-Qv\left(1+C\right)$ and whatever it is the sign of $dv/d\tau=Sv\left(1-v+C\right)\left(1+A\right)$ the trajectories enter and remain in the region $\Gamma=\{\left(u,v\right)\in \breve{\Omega},0\leq u\leq1, v\geq0$. Additionally, from the first equation of the system~\eqref{eq2} I have that since $Qu\left(u+C\right)v>0$ and $0<u<1$. Then,  
\begin{equation}\label{mhtae4}
\dfrac{du}{d\tau}\leq\left(1+A\right)\left(1+C\right)\left(1-u\right)\left(u-M\right).
\end{equation}
Separating variables and integrating both side of~\eqref{mhtae4}, I have
\[\begin{aligned}
\ln\left(u-M\right)-\ln\left(u-1\right)&<\left(1+A\right)\left(1+C\right)\tau+\varsigma,\\
u &< \dfrac{M-e^{\left(A+1\right)\left(1-M\right)\left(1+C\right)\tau+\varsigma}}{1-e^{\left(A+1\right)\left(1-M\right)\left(1+C\right)\tau+\varsigma}}=\dfrac{Me^{-\left(A+1\right)\left(1-M\right)\left(1+C\right)\tau+\varsigma}-e^{\varsigma}}{e^{-\left(A+1\right)\left(1-M\right)\left(1+C\right)\tau+\varsigma}-e^{\varsigma}}.
\end{aligned}\]
Where $\varsigma$ depends on the initial conditions. In addition $\limsup\limits_{\tau \rightarrow \infty} u\left(\tau\right)\leq1$.
	
On the other hand, from the second equation from the system~\eqref{eq3} I have that since $0<u\left(\tau\right)<1$ and $v>0$; thus $u-v+C<1-v+C$ for all $v\geq0$ and $Sv\left(u-v+C\right)<Sv\left(1-v+C\right)$. In addition $u+A<1+A$, therefore $Sv\left(u-v+C\right)\left(u+A\right)<Sv\left(1-v+C\right)\left(1+A\right)$. Thus
	\begin{equation}\label{mhtae5}
	\dfrac{dv}{d\tau}\leq Sv\left(1+A\right)\left(1-v+C\right).
	\end{equation}
	Separating variables and integrate both side of~\eqref{mhtae5}, I have 
	\[\begin{aligned}
	\ln\left(v\right)-\ln\left(v-1-C\right)&<S\left(1+A\right)\left(1+C\right)\tau+\varsigma,\\
	v&<\dfrac{-\left(1+C\right)e^{S\left(1+A\right)\left(1+C\right)\tau+\varsigma}}{1- e^{S\left(1+A\right)\left(1+C\right)\tau+\varsigma}}=\dfrac{\left(1+C\right)e^{\varsigma}}{e^{\varsigma}-e^{-S\left(1+A\right)\left(1+C\right)\tau}}.
	\end{aligned}\]
	Where $\varsigma$ depends on the initial conditions. In addition, $\limsup\limits_{\tau \rightarrow \infty} v\left(\tau\right)\leq1+C$.
	
	Next, I will show that $u\left(\tau\right)+v\left(\tau\right)$ are bounded. Set the function $w\left(\tau\right)=u\left(\tau\right)+v\left(\tau\right)$, then
	\[\dfrac{dw}{d\tau}=\dfrac{du}{d\tau}+\dfrac{dv}{d\tau}\leq\left(u+A\right)\left(1-u\right)\left(u-M\right)+Sv\left(1+A\right)\left(1-v+C\right).\]
	Thus, since $u\leq1$, $v\leq1$ and using the maximum value of $\left(u+A\right)\left(1-u\right)\left(u+C\right)$ and  $Sv\left(1+A\right)\left(1-v+C\right)$, I got
	\[\dfrac{dw}{d\tau}+w\left(\tau\right)\leq \dfrac{\left(S\left(1+A\right)^2\left(1-M\right)^2\left(1+C\right)+4S\left(1+A\right)+\left(S+AS+CS+ACS+1\right)^2 \right) }{4S\left(A+1\right)}\stackrel{\triangle}{=}\Theta.\] 
	Using the theory of differential inequality, for all $0\leq \tau_0\leq \tau$, I have that
	\begin{equation}\label{mhtae6}
	\dfrac{dw}{d\tau}+w\left(\tau\right)\leq \Theta.
	\end{equation}
	Multyplying both sides of~\eqref{mhtae6} by $I\left(\tau\right)$, a positive  integrating factor function 
	\[I\left(\tau\right)\dfrac{dw}{d\tau}+I\left(\tau\right)w\left(\tau\right)\leq I\left(\tau\right)\Theta \quad \text{with}\quad I\left(\tau\right)=e^{\int\limits_{\tau_0}^{\tau} d\tau}=e^{\tau-\tau_0}.\] \\
	So,  $w\left(\tau\right)\leq \Theta+\left(w\left(\tau_0\right)-\Theta\right)/\left(e^{\tau-\tau_0}\right)$. Hence, $\limsup\limits_{\tau \rightarrow \infty} w\left(\tau\right)\leq \Theta$. Therefore, all solutions in system~\eqref{eq3} with strong and weak Allee effect which are initiated in $\mathbb{R}^2_+$ and with initial values $u\left(0\right)=u_0>0$ and $v\left(0\right)=v_0>0$ are bounded. 

The $u$ nullclines for system~\eqref{eq3} are $u=0$ and $v=\left(u+A\right)\left(1-u\right)\left(u-M\right)/Q$, while the $v$ nullclines are $v=0$ and $v=u+C$. The equilibrium points for the system~\eqref{eq3} with strong Allee effect, i.e $M>0$ are $\left(0,0\right)$, $\left(1,0\right)$, $\left(M,0\right)$ (strong Allee effect), $\left(0,C\right)$ and the point(s) $\left(u^*,v^*\right)$, where $u^*$ is determined by the solution of
\begin{align}
& \left(u+A\right)\left(1-u\right)\left(u-M\right)/Q=u+C \,, \quad\text{or equivalently}\,,\nonumber\\
& f\left(u\right)=u^3-\left(M+1-A\right)u^2-\left(A\left(M+1\right)-Q-M\right)u+AM+CQ=0\,.\label{eq4}
\end{align}

\section{Strong Allee effect ($M>0$)}\label{S2}
Next, I study the case of $M>0$, then the cubic function $g\left(u\right)=\left(u+A\right)\left(1-u\right)\left(u-M\right)/Q$ always intersect the straight line $h\left(u\right)=u+C$ in one point; which is located in the second or third quadrant (see Figure~\ref{Fig1}). So there will be always a single negative real root, which I denote by $u=-G$.
\begin{figure}
\centering
\includegraphics[width=16cm]{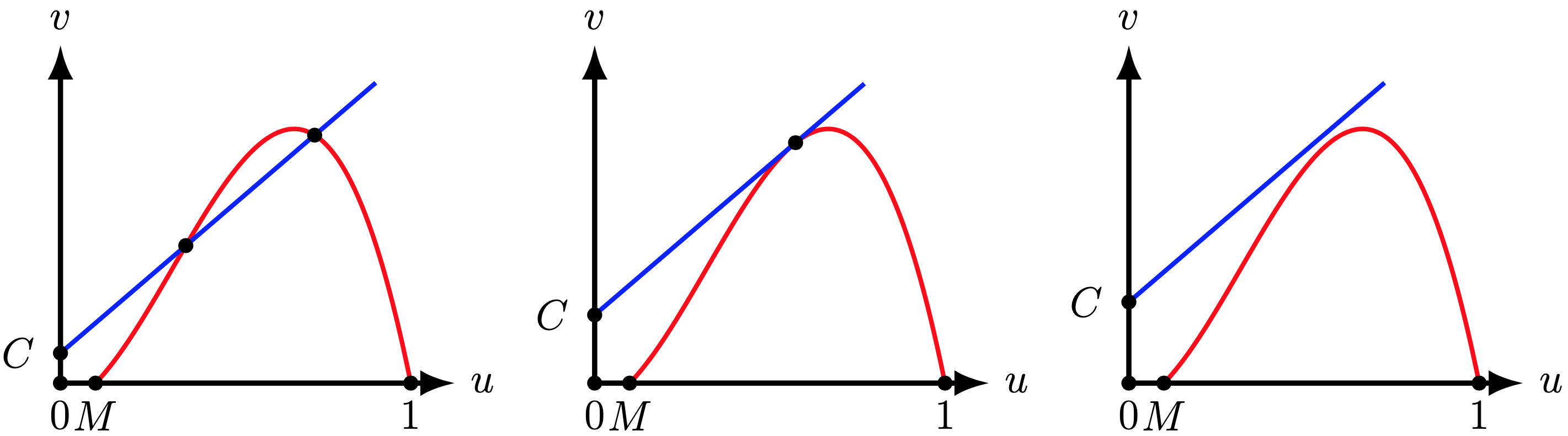}
\caption{Intersection of the function $g\left(u\right)=\left(u+A\right)\left(1-u\right)\left(u-M\right)/Q$ (red) and the straight line $h\left(u\right)=u+C$ (blue) for system~\eqref{eq3} affected by strong Allee effect, i.e $M>0$.}
\label{Fig1}
\end{figure}
Due to the difficult to determine the exact solutions of equation~\eqref{eq4}, I divide the cubic equation by $u+G$, I obtain the second order polynomial
\begin{equation}\label{eq5}	
u^2-\left(1-A+G+M\right)u+\left(M+Q-A\left(M+1\right)+G\left(1-A+G+M\right)\right)=0\,,
\end{equation} 
From~\eqref{eq4}, I get that $Q=\left(G+1\right)\left(G+M\right)\left(A-G\right)/\left(C-G\right)$, and since $Q>0$, then I obtain that $A<G<C$ or else, $A>G>C$.

The roots of~\eqref{eq5} are given by 
\begin{equation}\label{eq6}
\begin{aligned}
&u_{1,2} = \dfrac{1}{2}\left(1-A+G+M \pm \sqrt{\Delta}\right)\\
&\text{with}~\Delta=\left(1-A+G+M\right)^2-4\left(M+Q-A\left(M+1\right)+G\left(1-A+G+M\right)\right).
\end{aligned}
\end{equation}
Therefore, the solutions of the equation~\eqref{eq5} depend on the value of $\Delta$~\eqref{eq6} and thus:
\begin{itemize}
	\item if $\Delta<0$~\eqref{eq6}, then system~\eqref{eq2} has no positive equilibrium point in the first quadrant (see Region IV in Figure~\ref{Fig5});
	\item if $\Delta>0$~\eqref{eq6}, then system~\eqref{eq2} has two positive equilibrium points in the first quadrant $P_1=\left(u_1,u_1+C\right)$ and $P_2=\left(u_2,u_2+C\right)$ (see Region I, II and III in Figure~\ref{Fig5}); and
	\item if $\Delta=0$~\eqref{eq6}, then system~\eqref{eq2} has one positive equilibrium point in the first quadrant $P_1=P_2=\left(E,E+C\right)$ (see $Q=Q^*$ in Figure~\ref{Fig5}). With $E=\left(1-A+G+M\right)/2$.
\end{itemize}
Note that if $\Delta=0$~\eqref{eq6}, then two positive equilibrium points collapses, i.e $P_1=P_2$. I also observe that none of these equilibrium points explicitly depend on the system parameter $S$. Therefore, $S$ and $Q$ are the natural candidates to act as bifurcation parameters. 
 
\subsection{Nature of equilibrium points}
To determine the nature of the equilibrium points I must compute the Jacobian matrix $J\left(u,v\right)$ of system~\eqref{eq3} with strong Allee effect, that is:
\begin{equation}\label{eq7}
J\left(u,v\right)=\begin{pmatrix}
-5u^4+4\left(M-C-A+1\right)u^3+\beta  & -Qu\left(u+C\right) \\ 
Sv\left(A+C+2u-v\right)  &  S\left(C+u-2v\right)\left(A+u\right) 
\end{pmatrix}.
\end{equation}
With $\beta=3\left(A+C-M-AC+AM+CM\right)u^2+2\left(AC-AM-CM-Qv+ACM\right)u-C\left(AM+Qv\right)$.
\begin{lemm}
The equilibrium points $\left(0,0\right)$ and $\left(1,0\right)$ are a saddle point, $\left(M,0\right)$ is a repeller point and $\left(0,C\right)$ is an attractor point.
\end{lemm}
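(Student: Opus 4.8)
The plan is to classify each of the four boundary equilibria by linearisation: I would evaluate the Jacobian $J(u,v)$ of~\eqref{eq7} at each point and read off its eigenvalues. The key structural observation is that all four points lie on a coordinate axis, and the axes are invariant for the Kolmogorov system~\eqref{eq3}. Consequently one of the two off-diagonal entries of $J$ vanishes at each point, the Jacobian becomes triangular, and its eigenvalues are exactly the two diagonal entries. The entire lemma therefore reduces to determining the signs of four pairs of explicit expressions, using only that all parameters are positive with $0<A<1$ and $0<M<1$.

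First I would treat the three equilibria lying on the $u$-axis $\{v=0\}$, namely $(0,0)$, $(1,0)$ and $(M,0)$. Since $v=0$, the lower-left entry $Sv(A+C+2u-v)$ vanishes, so $J$ is upper triangular and one eigenvalue is the lower-right entry $S(C+u)(A+u)$, which is strictly positive at all three points. For the upper-left entry I would use that at $(1,0)$ and $(M,0)$ the factor $(u+A)(1-u)(u-M)-Qv$ vanishes; differentiating the first component $u(u+C)\bigl((u+A)(1-u)(u-M)-Qv\bigr)$ by the product rule then leaves only $u(u+C)$ times the $u$-derivative of that factor. A short calculation gives upper-left entries $-ACM$ at $(0,0)$, $-(1+A)(1+C)(1-M)$ at $(1,0)$, and $M(M+A)(M+C)(1-M)$ at $(M,0)$. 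With $0<M<1$ and $0<A<1$ the first two are negative while the third is positive, so $(0,0)$ and $(1,0)$ have eigenvalues of opposite sign and are saddles, whereas $(M,0)$ has two positive eigenvalues and is a repeller.

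Next I would handle $(0,C)$, which lies on the $v$-axis $\{u=0\}$. Here the upper-right entry $-Qu(u+C)$ vanishes, so $J$ is lower triangular and its eigenvalues are again the diagonal entries. At $u=0$ the upper-left entry collapses to $-C(AM+QC)$, and the lower-right entry $S(C+u-2v)(A+u)$ becomes $-SAC$. Both are strictly negative, so $(0,C)$ is an attractor, completing the classification.

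The computation is essentially routine, and the only care required is in two places: the product-rule evaluation of the upper-left entry at $(1,0)$ and $(M,0)$, where exploiting the vanishing of the cubic factor avoids a long expansion of~\eqref{eq7}; and the sign bookkeeping, which hinges on $1-M>0$ to separate the repeller $(M,0)$ from the saddle $(1,0)$. Because triangularity bypasses the characteristic polynomial entirely, no genuine obstacle arises beyond keeping track of signs.
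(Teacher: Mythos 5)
Your proposal is correct and follows essentially the same route as the paper: both evaluate the Jacobian~\eqref{eq7} at the four boundary equilibria and classify them by sign analysis, the only cosmetic difference being that you read the eigenvalues directly off the diagonal of the triangular matrices while the paper phrases the same information through the determinant and trace. Your computed entries ($-ACM$ at $(0,0)$, $-(1+A)(1+C)(1-M)$ at $(1,0)$, $M(M+A)(M+C)(1-M)$ at $(M,0)$, and $-C(AM+QC)$, $-ACS$ at $(0,C)$) all match the paper's, so the classification stands as stated.
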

\begin{proof}
The Jacobian matrix~\eqref{eq7} evaluate at the equilibrium point $\left(0,0\right)$ gives
\[ J\left(0,0\right)=\begin{pmatrix} -ACM  & 0 \\ 0  &  ACS \end{pmatrix}.\]
Hence, $\det\left(J\left(0,0\right)\right)=-A^2C^2MS<0$ since $0<M<1$. Therefore, the equilibrium $\left(0,0\right)$ is a saddle point.
Similarly, the Jacobian matrix~\eqref{eq7} evaluate at the equilibrium point $\left(1,0\right)$ gives
\[ J\left(1,0\right)=\begin{pmatrix} -\left(1-M\right)\left(C+1\right)\left(A+1\right)  & -Q\left(C+1\right) \\  0  &  S\left(C+1\right)\left(A+1\right) \end{pmatrix}.\]
Hence, $\det\left(J\left(1,0\right)\right)=-S\left(1-M\right)\left(C+1\right)^2\left(A+1\right)^2<0$ since $0<M<1$. Therefore, the equilibrium $\left(1,0\right)$ is also a saddle point.
The Jacobian matrix~\eqref{eq7} evaluate at the equilibrium point $\left(M,0\right)$ gives
\[J\left(M,0\right)=\begin{pmatrix} M\left(1-M\right)\left(C+M\right)\left(A+M\right)  & -MQ\left(C+M\right) \\  0  & S\left(A+M\right)\left(C+M\right) \end{pmatrix}.\]
Hence, $\det\left(J\left(M,0\right)\right)=MS\left(1-M\right)\left(C+M\right)^2\left(A+M\right)^2>0$ and $\tr\left(J\left(M,0\right)\right)=M\left(C+M\right)\left(A+M\right)\left(1-M+S\right)>0$ since $0<M<1$. Therefore, the equilibrium $\left(M,0\right)$ is a repeller point.
Finally, the Jacobian matrix~\eqref{eq7} evaluate at the equilibrium point $\left(0,C\right)$ gives
\[J\left(0,C\right)=\begin{pmatrix} -C\left(AM+QC\right)  & 0 \\  ACS  &  -ACS  \end{pmatrix}.\]
Hence, $\det\left(J\left(0,C\right)\right)=AC^2S\left(AM+QC\right)>0$ and $\tr\left(J\left(0,C\right)\right)=-C\left(AM+QC+AS\right)<0$ since $0<M<1$. Therefore, the equilibrium $\left(0,C\right)$ is an attractor point.
\end{proof}

The positive singularity lie on the curve $u=v+C$. So that, the Jacobian matrix of the system~\eqref{eq3} with strong Allee effect is:
\begin{equation} \label{eq8}
J\left(u,u+C\right)=\begin{pmatrix}
 u\left(u+C\right)(-M+A(1+M)+2u(M+1-A)-3u^2)  & -Qu \left(u+C\right)\\ 
S\left(u+A\right) \left(u+C\right) &  -S\left(u+A\right)\left(u+C\right) 
\end{pmatrix}.
\end{equation}
The determinant and the trace of the Jacobian matrix~\eqref{eq8} are:
\begin{align}
\det\left(J\left(u,u+C\right)\right) =& Su\left(A+u\right)\left(C+u\right)^2\left(Q-J_{11}\left(u\right)\right)	\,,\label{eq9}\\
\tr\left(J\left(u,u+C\right)\right) 	=& \left(C+u\right)\left(uJ_{11}-S\left(A+u\right)\right)\,.\label{eq10}
\end{align}
With $J_{11}=M-A(1+M)-2u+2Au-2Mu+3u^2$. Note that the signs of the determinant~\eqref{eq9} depends on the value of $Q-J_{11}\left(u\right)$ and the signs of the trace~\eqref{eq10} depends on the value of $uJ_{11}\left(u\right)-S\left(A+u\right)$, see Figure~\ref{Fig2}.
\begin{theo}\label{p1}
Let the system parameters of~\eqref{eq3} be such that $M>0$ and $\Delta>0$. Then the equilibrium point $P_{1}$ is a saddle point.
\end{theo}
\begin{proof}
Evaluating $Q-J_{11}\left(u\right)$~\eqref{eq9} at $u_1$ gives
	\[Q-J_{11}\left(u_1\right)=-\sqrt{\Delta}\left(1-A+3G+M-\sqrt{\Delta}\right)<0.\]
Hence, $\det\left(J\left(P_{1}\right)\right)<0$ and thus $P_{1}$ is a saddle point.
\end{proof}
\begin{theo}\label{p2}
Let the system parameters of~\eqref{eq3} be such that $M>0$ and $\Delta>0$. Then the equilibrium point $P_{2}$ is
\begin{enumerate}
\item stable if $S<\dfrac{J_{11}\left(u_1\right)\left(1-A+M+G+\sqrt{\Delta}\right)}{4\left(1+A+M+G+\sqrt{\Delta}\right)}$,
\item unstable if $S>\dfrac{J_{11}\left(u_1\right)\left(1-A+M+G+\sqrt{\Delta}\right)}{4\left(1+A+M+G+\sqrt{\Delta}\right)}$,
\item a centre if $S=\dfrac{J_{11}\left(u_1\right)\left(1-A+M+G+\sqrt{\Delta}\right)}{4\left(1+A+M+G+\sqrt{\Delta}\right)}$.
\end{enumerate}
\end{theo}
\begin{proof}
Evaluating $Q-J_{11}\left(u\right)$~\eqref{eq9} at $u_2$ gives
	\[Q-J_{11}\left(u_2\right)=\sqrt{\Delta}\left(1-A+3G+M-\sqrt{\Delta}\right)>0.\]
Hence, $\det\left(J\left(P_{2}\right)\right)>0$. Then, the behaviour of the equilibrium point $P_2$ depends on the trace~\eqref{eq10} of the Jacobian matrix~\eqref{eq8}. Evaluating $uJ_{11}\left(u\right)-S\left(A+u\right)$~\eqref{eq10} at $u_2$ gives
\[\begin{aligned}
uJ_{11}\left(u\right)-S\left(A+u\right)=	&\dfrac{1}{8}\left(\left(1-A+M+G+P\right)\left(1+A-M-G-P\right)\left(1-A+G+P-M\right)\right.\\
				&\left.+2\left(1+M+G+P+A\right)\left(A-G-P\right)-4S\left(1+A+M+G+P\right)\right).
\end{aligned}\]

Therefore, the behaviour of the equilibrium point depends on the value of $uJ_{11}\left(u\right)-S\left(A+u\right)$, see Figure~\ref{Fig2}.
\end{proof}
\begin{figure}
\centering
\includegraphics[width=8cm]{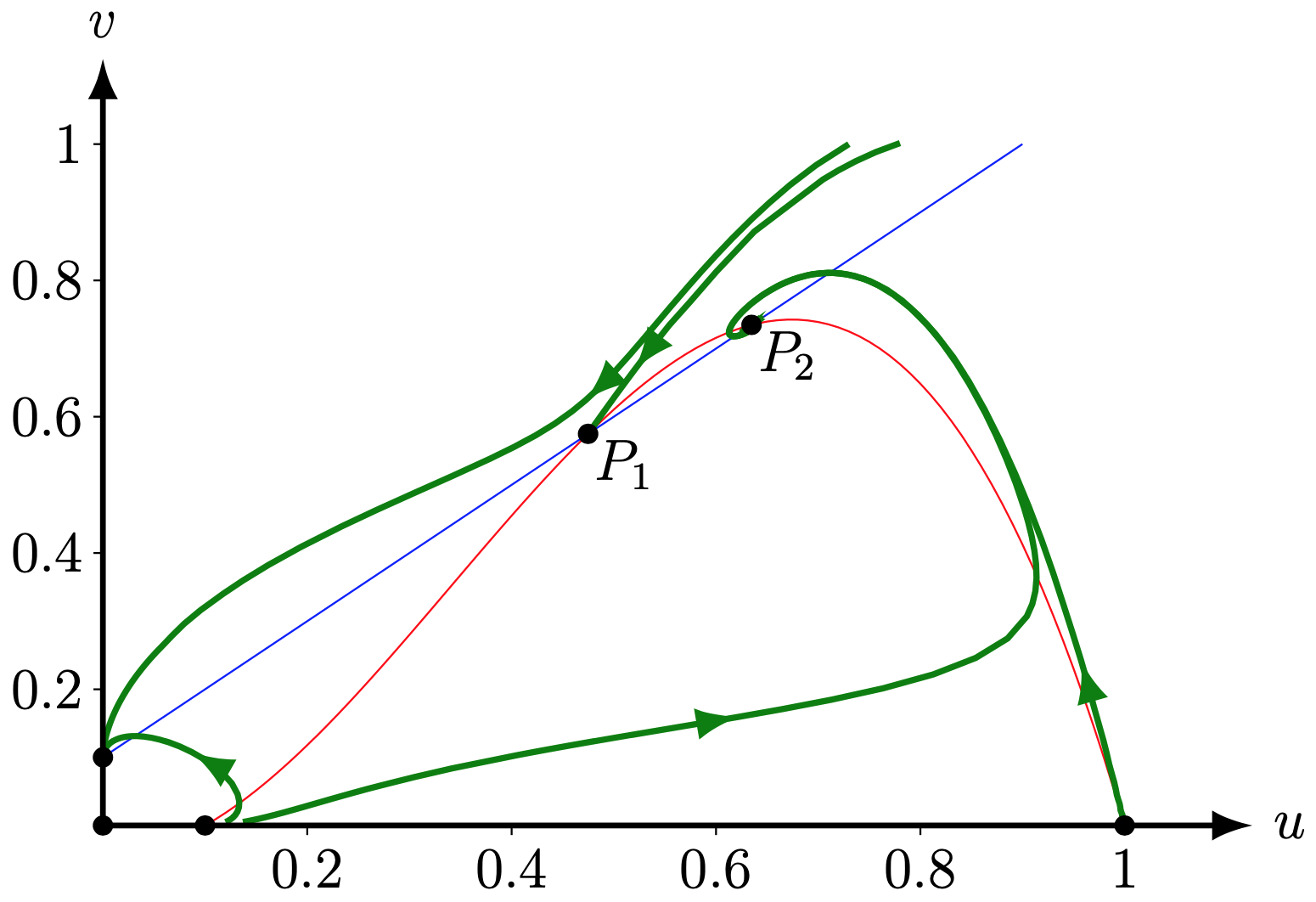}
\includegraphics[width=8cm]{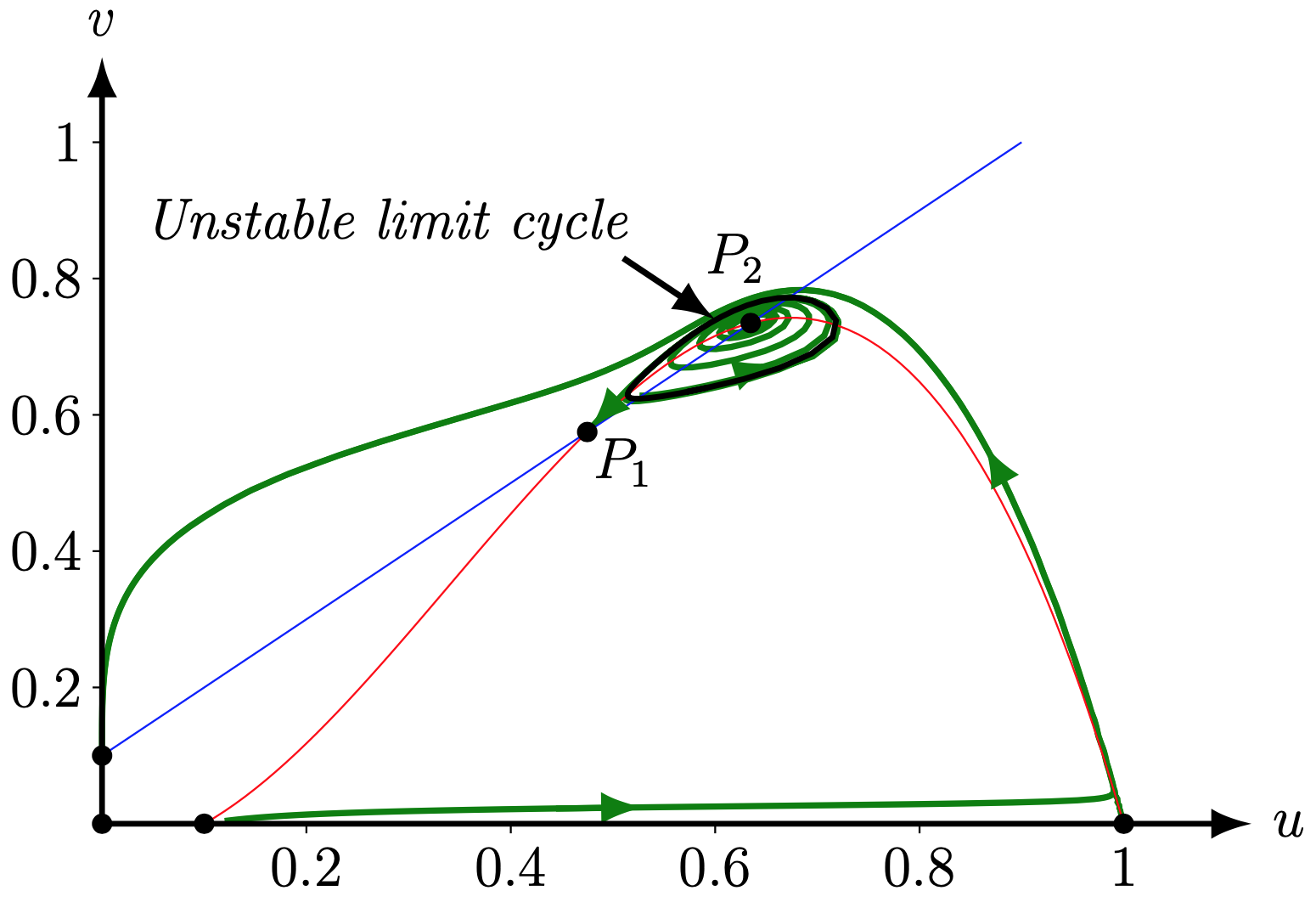}
\includegraphics[width=8cm]{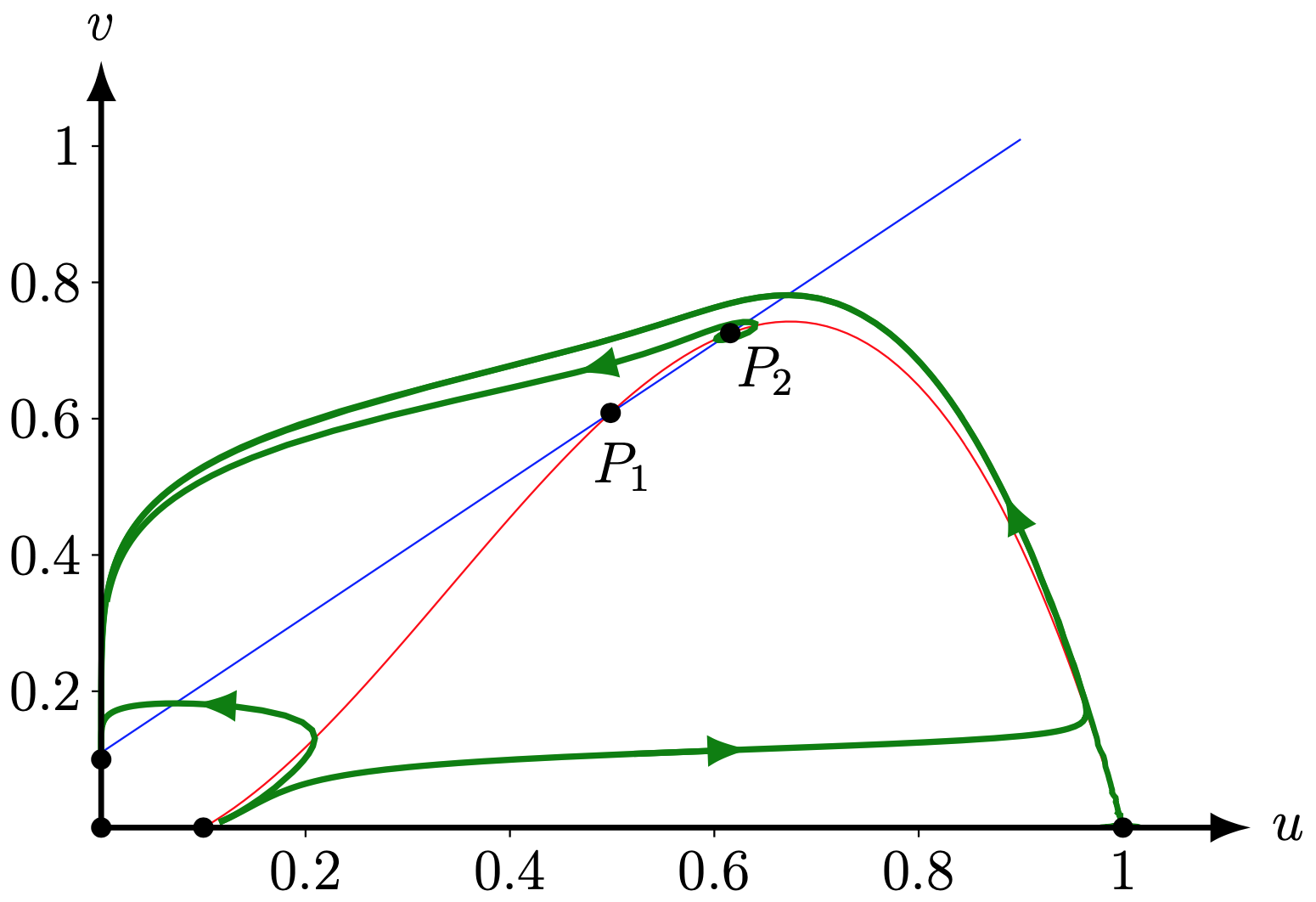}
\caption{The blue (red) curve represents the predator (prey) nullcline. If $M=0.1$; $A=0.08$; $Q=0.19$, $C=0.1$ fixed, then system~\eqref{eq3} has two positive equilibrium points namely $P_{1}$ and $P_{2}$. In the top left panel if $S=0.2$, then $P_{2}$ is a stable node. In the top right panel if $S=0.08$, then $P_{2}$ is a stable node surrounded by an unstable limit cycles. In the bottom panel if $S=0.06$, then the equilibrium point $P_{2}$ is unstable and thus the equilibrium point $\left(0,C\right)$ is a global attractor.}\label{Fig2}
\end{figure}

Note that as the trace changes sign, a Hopf bifurcation occurs~\cite{chicone} at the equilibrium point $P_{2}$. Thus, it is surrounded by an unstable limit cycle.

Let $W^s_s\left(P_{1}\right)$ be the side of the stable manifold of $P_{1}$ that goes down to the left and its acts as a separatrix curve $\Sigma$ in the first quadrant. Therefore, any initial conditions above this separatrix has the $\omega$-limit the point $\left(0,C\right)$. Moreover, any initial conditions under this separatrix has the $\omega$-limit the point $P_{2}$ when its is stable. 

\begin{theo}
There exists conditions in the system parameters for which a heteroclinic curve joining the equilibrium points $(1,0)$ and $P_{1}$. 
\end{theo}
\begin{proof}
Let $W^u_s\left(\left(1,0\right)\right)$ be the side of the unstable manifold of the saddle point $\left(1,0\right)$ that goes up to the left and $W^s_s\left(P_{1}\right)$ the side of the stable manifold of the saddle point $P_{1}$ that goes down to the left. It is clear that the curve determined by $W^u_s\left(\left(1,0\right)\right)$ remain at $\Gamma$ since it is an invariant region and its $\omega$-limit can be the point $P_2$ when it is stable or $\left(0,C\right)$ when $P_2$ is stable surrounded by an unstable limit cycle or $P_2$ is unstable.

Assuming that the $\alpha$-limit of $W^s_s\left(P_{1}\right)$ is out of $\Gamma$, then the curve $\Sigma$ is above of the curve determined by $W^u_s\left(\left(1,0\right)\right)$. If the $\alpha$-limit of $W^s_s\left(P_{1}\right)$ is inside of $\Gamma$, then the curve $\Sigma$ is below the curve determined by $W^u_s\left(\left(1,0\right)\right)$. Then, by the theorem of existence and uniqueness of solutions~\cite{chicone}, there exists a subset of system parameters for which the two manifolds coincide, forming the heteroclinic curve, see Figure~\ref{Fig3}. 
\end{proof}

\begin{theo}\label{HOM}
There exists conditions on the parameter values for which:
\begin{enumerate}
\item It exists an homoclinic curve determined by the stable and unstable manifold of point $P_{1}$,
\item It exists a non-infinitesimal limit cycle that bifurcates of the homoclinic~\cite{gaiko} surrounding the point $P_{2}$.
\end{enumerate}
\end{theo}
\begin{proof}
I observe that if $\left(u,v\right)\in\overline{P_{1}P_{2}}$, then $du/dt>0$ and thus the direction of the vector field at the points lying in the predator nullcline, i.e $v=u+C$, is to the right, since 
	\[\dfrac{du}{dt}=u\left(u+C\right)\left(\left(1-u\right)\left(u+A\right)\left(u-M\right)-Qu\right)>0\quad \text{and}\quad \dfrac{dv}{dt}=0.\]
Let $W^s_s\left(P_{1}\right)$ be the side of the stable manifold of $P_{1}$ that goes down to the left , $W^s_i\left(P_{1}\right)$ be the side of the stable manifold of $P_{1}$ that goes up to the right and $W^u_r\left(P_{1}\right)$ be the side of the unstable manifold of $P_{1}$ that goes up to the right. By the theorem of existence and uniqueness of solutions~\cite{chicone} $W^u_r\left(P_{1}\right)$ cannot intersect the trajectory determined by $W^s_s\left(P_{1}\right)$, since $\Gamma$ is an invariant region and the trajectories cannot cross the line $u=1$ towards the right. Therefore, the $\omega$-limit of $W^u_r\left(P_{1}\right)$ must be the point $P_{2}$ when it is stable, the equilibrium point $\left(0,C\right)$ when the equilibrium point $P_{2}$ is unstable or stable surrounded by an unstable limit cycle. By continuity of the system parameters I get that $W^u_r\left(P_{1}\right)$ can connects with $W^s_s\left(P_{1}\right)$ and thus a homoclinic curve is obtained, see Figure~\ref{Fig3}.
\begin{figure}
\centering
\includegraphics[width=14cm]{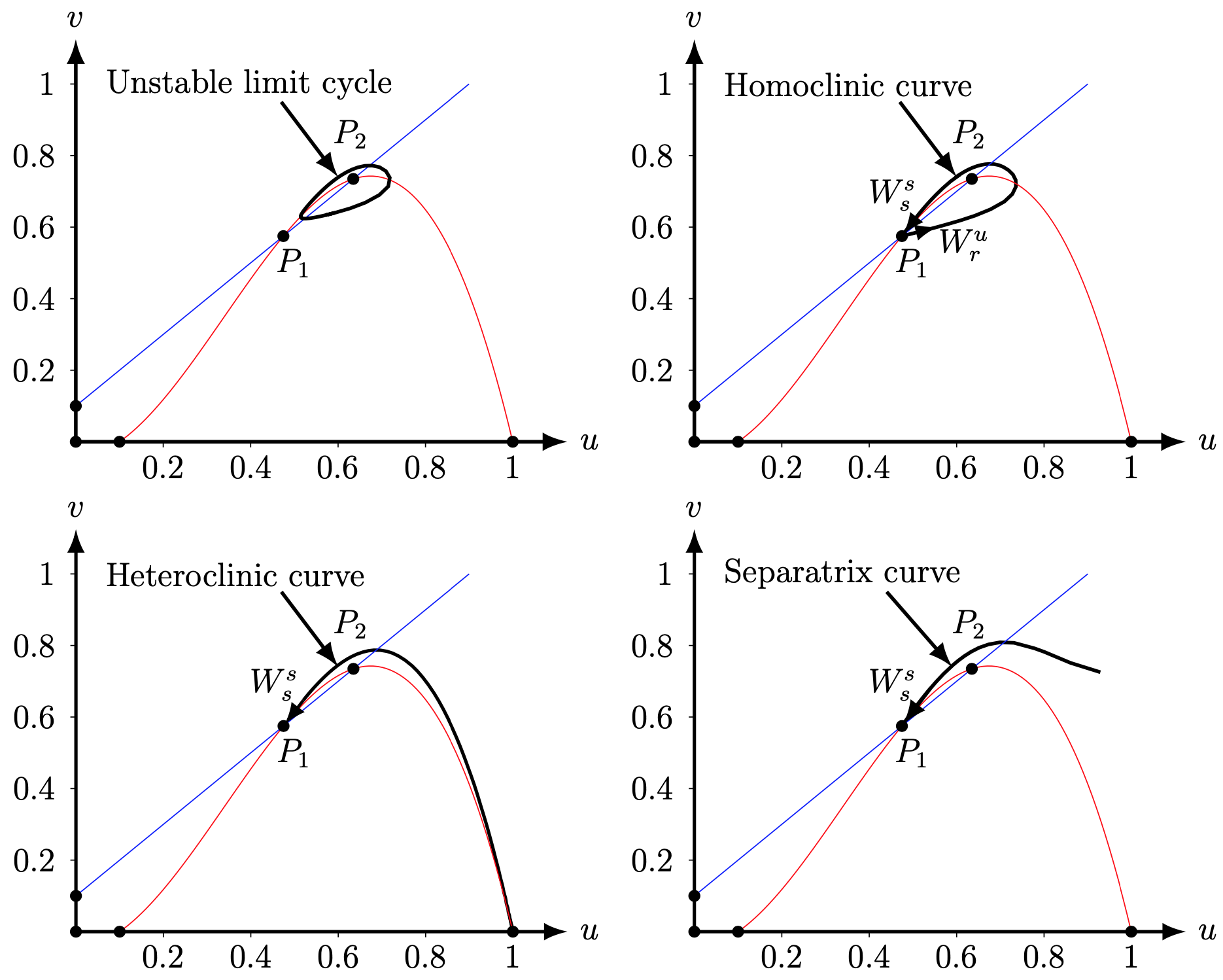}
\caption{The blue (red) curve represents the predator (prey) nullcline. If $M=0.1$, $A=0.08$,  $Q=0.19$ and $C=0.1$ are fixed, then by continuity of the parameter $S$ the unstable limit cycle increase the amplitude until its coincide with an homoclinic curve (see top panel). Then, the side of the stable manifold of the saddle point $P_{1}$ that goes down to the left ($W^s_s\left(P_{1}\right)$) connects with the side of the unstable manifold of the saddle point $\left(1,0\right)$ that goes up to the left ($W^u_s\left(\left(1,0\right)\right)$) forming an heteroclinic curve (see bottom left panel). Finally, in the bottom right panel $W^s_s\left(P_{1}\right)$ form a separatrix curve.}\label{Fig3}
\end{figure}

On the other hand, the breaking of the homoclinic curve determined by the intersection of $W^s_s\left(P_{1}\right)$ and $W^u_r\left(P_{1}\right)$, i.e $W^s_s\left(P_{1}\right)\cap W^u_r\left(P_{1}\right)$, generates a non--infinitesimal limit cycle (originating a homoclinic bifurcation), which could coincide with other limit cycle obtained via Hopf bifurcation (infinitesimal limit cycle), when  $P_{2}$ is a centre-focus, see Theorem~\ref{p2}.
\end{proof}

Next, I study the case when $\Delta=0$~\eqref{eq6}. Thus, the equilibrium points $P_1$ and $P_2$ collapse such that  $P_1=P_2=\left(E,E+C\right)$ with $E=\left(1-A+G+M\right)/2$.
\begin{theo} 
Let the system parameters be such that $\Delta=0$ and $M>0$, then the equilibrium point $\left(E,E+C\right)$ is:
\begin{enumerate}
\item a saddle-node attractor if $S<\dfrac{\left(1-A+M+G\right)\left(1+A-M-G\right)\left(1-A+G-M\right)}{4\left(1+A+M+G\right)}+\dfrac{A-G}{2},$
\item a saddle-node repeller if $S>\dfrac{\left(1-A+M+G\right)\left(1+A-M-G\right)\left(1-A+G-M\right)}{4\left(1+A+M+G\right)}+\dfrac{A-G}{2}.$
\end{enumerate}
\end{theo}
\begin{proof}
Evaluating $Q-J_{11}\left(u\right)$~\eqref{eq9} at $E$ gives
	\[Q-J_{11}\left(E\right)=0\]
	Hence, $\det\left(J\left(E,E+C\right)\right)=0$. Then, the behaviour of the equilibrium point $\left(E,E+C\right)$ depends on the trace~\eqref{eq10} of the Jacobian matrix~\eqref{eq8}. Evaluating $uJ_{11}\left(u\right)-S\left(A+u\right)$~\eqref{eq10} at $E$ gives
	\[uJ_{11}\left(E\right)-S\left(A+E\right)=\dfrac{1}{4}\left(\left(1-A+M+G\right)\left(1+A-M-G\right)\left(1-A+G-M\right)+2\left(1+M+G+A\right)\left(A-G\right)-4S\left(1+A+M+G\right)\right).\]
Therefore, the behaviour of the trace and thus the stability of the equilibrium point $\left(E,E+C\right)$ depends on the value of $uJ_{11}\left(E\right)-S\left(A+E\right)$.
\end{proof}
\begin{theo}
Let the system parameter be such that $\Delta<0$~\eqref{eq6}, then system~\eqref{eq3} has no positive equilibrium points and thus $\left(0,C\right)$ is global attractor.	
\end{theo}
\begin{proof}
I have that all solutions of system~\eqref{eq3} are bounded and $\Gamma$ is an invariant region. Moreover, the equilibrium point $\left(1,0\right)$ is a saddle point and if $\Delta<0$~\eqref{eq6} then there are no positive equilibrium points in the first quadrant. Therefore, by the Poincar\'e--Bendixon Theorem the only $\omega$--limit of the solutions in the first quadrant is the equilibrium point $\left(0,C\right)$. 
\end{proof}

\subsection{Bifurcation Analysis}
In this section I will discuss the bifurcation analysis of system~\eqref{eq3} for $\Delta=0$~\eqref{eq6} and $M>0$.
\begin{theo}\label{BT}
Let the system parameters be such that $\Delta=0$~\eqref{eq6}, $M>0$ and $Q=S\left(1+A+M+G\right)/\left(1-A+M+G\right)$, then system~\eqref{eq3}	 undergoes a Bogdanov--Takens bifurcation.
\end{theo}
\begin{proof}
	If $Q=S\left(1+A+M+G\right)/\left(1-A+M+G\right)$, then the trace is $\tr\left(J\left(E,E+C\right)\right)=0$ and the Jacobian matrix~\eqref{eq8} at the equilibrium point $\left(E,E+C\right)$ simplified to
	\[\begin{aligned}
	J\left(E,E+C\right) &=\begin{pmatrix}
	S\left(E+A\right)E & -S\left(E+A\right)E \\ 
	S\left(E+A\right)E & -S\left(E+A\right)E 
	\end{pmatrix}=\dfrac{S}{4}\left(1-A+M+G\right)\left(1+M+G+A\right) \begin{pmatrix}
	1 & -1 \\ 
	1 & -1 
	\end{pmatrix}. 
	\end{aligned}\]
	Now, I find the Jordan normal form of $J\left(E,E+C\right)$ which has equal eigenvalues and a unique eigenvector
	$\begin{pmatrix}
	1 \\ 
	1 
	\end{pmatrix}$. This vector will be the first column of the matrix of transformations $\Upsilon$. To obtain the second column I choose a vector that makes the matrix $\Upsilon$, that is $\begin{pmatrix}
	-1 \\ 0 \end{pmatrix}$. Thus,
	\[\Upsilon=\begin{pmatrix} 1 & -1 \\ 1 & 0 \end{pmatrix}~\text{and}~\Upsilon^{-1}\left(J\left(E,E+C\right)\right)\Upsilon=\begin{pmatrix}0 & \dfrac{S}{4}\left(1-A+M+G\right)\left(1+M+G+A\right) \\ 0 & 0 \end{pmatrix}.\]
	Hence, I have the Bogdanov--Takens bifurcation or bifurcation of codimension 2~\cite{xiao2}. Thus, the point $\left(E,E+C\right)$ is a cusp point, see Figure~\ref{Fig4}.
\end{proof} 
\begin{figure}
\centering
\includegraphics[width=8cm]{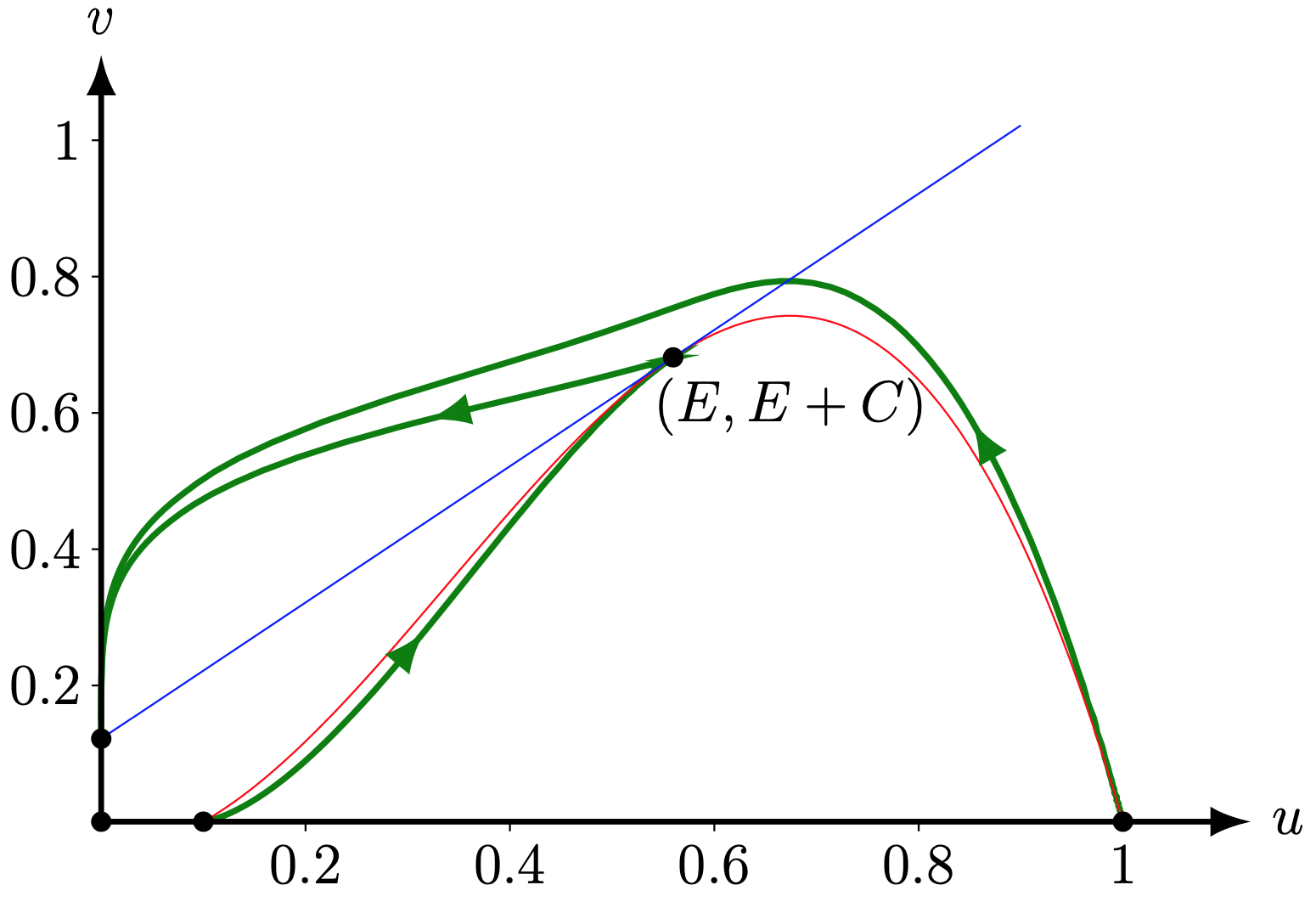}
\caption{If $M=0.1$, $A=0.08$, $Q=0.19$, $C=0.12176874$ and $S=0.08$, then the point $\left(0,C\right)$ is local attractor and the equilibrium $\left(E,E+C\right)$ is a cusp point.}\label{Fig4}
\end{figure}

\begin{theo}\label{SN}
Let the system parameters be such that $\Delta=0$~\eqref{eq6} and $M>0$, then system~\eqref{eq3} undergoes a saddle-node bifurcation at the equilibrium point $\left(E,E+C\right)$.
\end{theo}
\begin{proof}
I will proved that the system~\eqref{eq3} undergoes a saddle-node bifurcation at $Q=S\left(1+A+M+G\right)/\left(1-A+M+G\right)$ based on Sotomayor's theorem~\cite{perko}. If $\Delta=0$~\eqref{eq6}, then system~\eqref{eq3} has only one positive equilibrium point in the first quadrant. That is $\left(E,E+C\right)$, with $E=\dfrac{1}{2}\left(1-A+G+M\right)$.
	
The Jacobian matrix of the system~\eqref{eq3} evaluate at the equilibrium point $\left(E,E+C\right)$ is 
	\[\begin{aligned}
	J\left(E,E+C\right) & =\begin{pmatrix}
	QE\left(E+C\right) & -QE\left(E+C\right)  \\ 
	S\left(E+C\right)\left(E+A\right) & -S\left(E+C\right)\left(E+A\right) 
	\end{pmatrix}\\
	& =\left(A-G-M-1-2C\right)\begin{pmatrix}
	\dfrac{Q\left(A-G-M-1\right)}{4} & -\dfrac{Q\left(A-G-M-1\right)}{4} \\ 
	\dfrac{-S\left(A+G+M+1\right)}{4} & \dfrac{\left(A+G+M+1\right)}{4}
	\end{pmatrix}\\
	\end{aligned}.\]
	The vector form of system~\eqref{eq3} is given by
	\begin{equation} \label{mhtae12}
	f\left(u,v;Q\right) =\begin{pmatrix}
	\left(u+A\right)\left(1-u\right)\left(u-M\right)-Qv\\ 
	u-v+C
	\end{pmatrix}.
	\end{equation}
	Let $V=\begin{pmatrix}
	v_1 & v_2 \end{pmatrix}^T=\begin{pmatrix}
	1 & 1 \end{pmatrix}$ be the eigenvector corresponding to the eigenvalue $\Delta=0$ of $J\left(E,E+C\right)$. In addition, let $U=\begin{pmatrix}
	u_1 & u_2 \end{pmatrix}^T=\begin{pmatrix}
	-\dfrac{S\left(1+A+G+M\right)}{Q\left(1-A+G+M\right)} & 1 \end{pmatrix}$ be the eigenvector corresponding to the eigenvalue $\Delta=0$ of $\left(J\left(E,E+C\right)\right)^T$.
	
	On the other hand, differentiating the the vector function~\eqref{mhtae12} with respect to the bifurcation parameter $Q$ I obtain
	\[f_Q\left(u,v,Q\right)=\begin{pmatrix}
	\dfrac{A-1-G-M-2C}{2}\\ 
	0
	\end{pmatrix}.\]
	Therefore,
	\[Uf_Q\left(u,v;Q\right)=\dfrac{S\left(A+G+M+1\right)\left(1-A+2C+G+M\right)}{2Q\left(1-A+G+M\right)}\neq0.\]
	Next, I analyse the expression $U[D^2f\left(u,v;Q\right)\left(V,V\right)]$ where $V=\left(v_1,v_2\right)$ and $D^2f\left(u,v;Q\right)\left(V,V\right)$ is given by
	\[\begin{aligned}
	D^2f\left(u,v;Q\right)\left(V,V\right) & =\dfrac{\partial^2f\left(u,v;Q\right)}{\partial u^2}v_1v_1+\dfrac{\partial^2f\left(u,v;Q\right)}{\partial u\partial v}v_1v_2 +\dfrac{\partial^2f\left(u,v;Q\right)}{\partial v\partial u}v_2v_1+\dfrac{\partial^2f\left(u,v;Q\right)}{\partial v^2}v_2v_2\\
	D^2f\left(u,v;Q\right)\left(V,V\right)& = \begin{pmatrix}
	2\left(M-A-2\right)\\ 
	0
	\end{pmatrix}. 
	\end{aligned}\]
	Thus,
	\[\begin{aligned}
	U[D^2f\left(u,v;Q\right)]=\dfrac{2S\left(A+2-M\right)\left(A+G+M+1\right)}{Q\left(1-A+G+M\right)}\neq0
	\end{aligned}.\]
	Where $0<M<1$ and $A<1$. Therefore, by Sotomayor's theorem the system~\eqref{eq3} has a saddle-node bifurcation at $\left(E,E+C\right)$.
\end{proof}
\begin{figure}
\centering
\includegraphics[width=10cm]{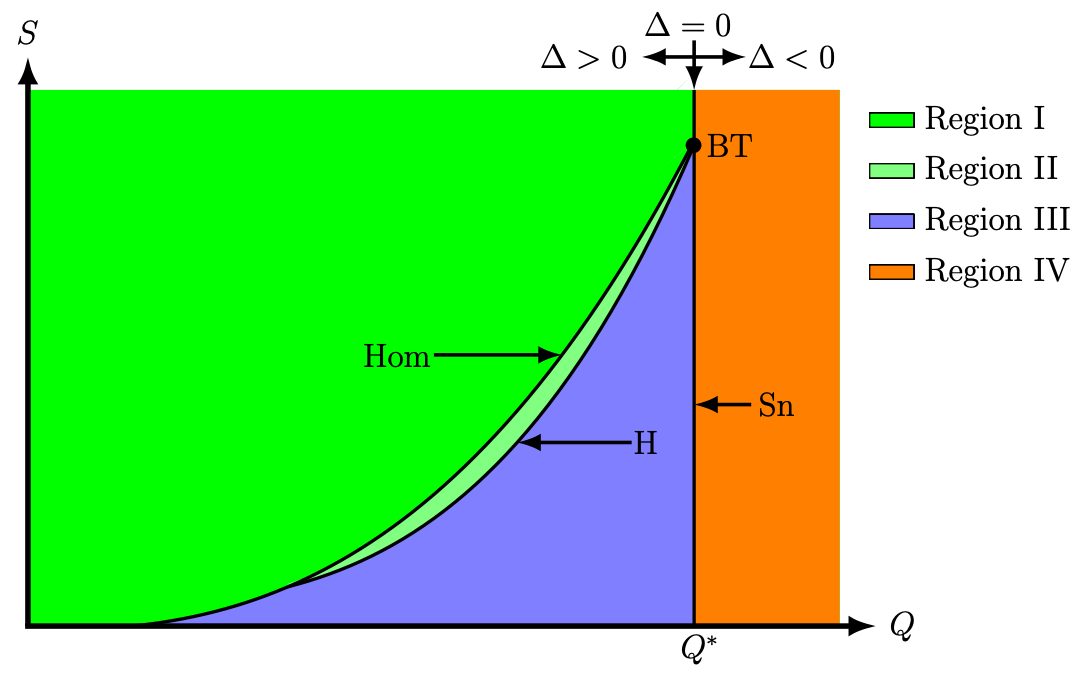}
\caption{The bifurcation diagram of system~\eqref{eq3} with strong Allee effect for $(M,A,C)=(0.1,0.08,0.19)$ fixed and created with the numerical bifurcation package MATCONT~\cite{matcont}. The curve H represents the Hopf curve Hom represents the homoclinic curve and Sn represent the saddle-node bifurcation. The point BT represent the Bogdanov--Takens bifurcation.}\label{Fig5}
\end{figure}
In order to get the bifurcation diagram I follow~\cite{arancibia} and we use the numerical bifurcation package MATCONT~\cite{matcont}. Furthermore, the bifurcation curves obtained from Theorems~\ref{BT},~\ref{SN} and~\ref{HOM} divide the $(Q,S)$ parameter space into four parts. When the parameters $Q,S$ are located in Region I (dark green area), the equilibrium point $P_{2}$ is stable, while in Region II (light green area) the equilibrium point is stable surrounded by an unstable limit cycle. Moreover, when the parameters $(Q,S)$ are located in Region III (light blue area), the equilibrium point $P_{2}$ is unstable. Additionally, we can observe that the modification of the parameter $S$ changes the stability of the positive equilibrium point $P_{2}$ of system~\eqref{eq3}, while the other equilibrium points $(0,0)$, $(1,0)$, $(M,0)$ and $(0,C)$ do not change their behaviour. Moreover, when parameters lie in the curve $Q=Q^*$ the equilibrium points $P_{1}$ and $P_{2}$ collapse, so that ~\eqref{eq3} has conditions for a saddle-node and Bogdanov--Takens bifurcation. Finally, when the parameters are located in Region IV, there are not positive equilibrium point in system~\eqref{eq3}, see Figure~\ref{Fig5}.

\section{Weak Allee effect ($M<0$)}\label{S3}
Next, I study the case of $M<0$, then the intersection of the cubic function $g\left(u\right)=\left(u+A\right)\left(1-u\right)\left(u-M\right)/Q$ and the straight line $h\left(u\right)=u+C$ depends on the value of the parameter $C$. I describe the different configurations for the solutions of equation~\eqref{eq4} given by $$u^3-\left(M+1-A\right)u^2-\left(A\left(M+1\right)-Q-M\right)u+AM+CQ=0,$$ and hence the number of positive equilibrium points (see Figure~\ref{Fig6}), below
\begin{enumerate}[label=(\roman*)]
\item \label{case1} If $C>-AM/Q$ and $M<0$, see top row of Figure~\ref{Fig6}.
\begin{enumerate}
\item \label{1.a} If $M+1-A>0$ or $M+1-A<0$ and $A\left(M+1\right)-Q-M<0$ or $M+1-A=0$ and $A\left(M+1\right)-Q-M>0$, then system~\eqref{eq3} has up to two positive equilibrium points in the first quadrant. 
\item \label{1.b} If $M+1-A<0$ and $A\left(M+1\right)-Q-M\geq0$ or $M+1-A=0$ and $A\left(M+1\right)-Q-M\leq0$, then system~\eqref{eq3} has no positive equilibrium points in the first quadrant. 
\end{enumerate}
\item \label{case2} If $C=-AM/Q$ and $M<0$, see middle row of Figure~\ref{Fig6}.
\begin{enumerate}
\item \label{2.a} If $M+1-A>0$ and $A\left(M+1\right)-Q-M<0$, then system~\eqref{eq3} has up to two positive equilibrium points in the first quadrant.
\item \label{2.b} If $M+1-A>0$ and $A\left(M+1\right)-Q-M\geq0$ or $M+1-A<0$ and $A\left(M+1\right)-Q-M>0$ or $M+1-A=0$ and $A\left(M+1\right)-Q-M>0$, then system~\eqref{eq3} has one positive equilibrium point in the first quadrant.  
\item \label{2.c} If $M+1-A<0$ and $A\left(M+1\right)-Q-M\leq0$ or $M+1-A=0$ and $A\left(M+1\right)-Q-M\leq0$, then system~\eqref{eq3} has no positive equilibrium points in the first quadrant.
\end{enumerate}
\item \label{case3} If $C<-AM/Q$ and $M<0$, see bottom row of Figure~\ref{Fig6}.
\begin{enumerate}
\item \label{3.a} If $M+1-A\geq0$ and $A\left(M+1\right)-Q-M\geq0$ or $M+1-A\leq0$ and $A\left(M+1\right)-Q-M<0$ or $M+1-A<0$ and $A\left(M+1\right)-Q-M>0$, then system~\eqref{eq3} has one positive equilibrium point in the first quadrant.
\item \label{3.b} If $M+1-A>0$ and $A\left(M+1\right)-Q-M<0$, then system~\eqref{eq3} has up to three positive equilibrium points in the first quadrant.
\end{enumerate}
\end{enumerate}

\begin{figure}
\centering
\includegraphics[width=15cm]{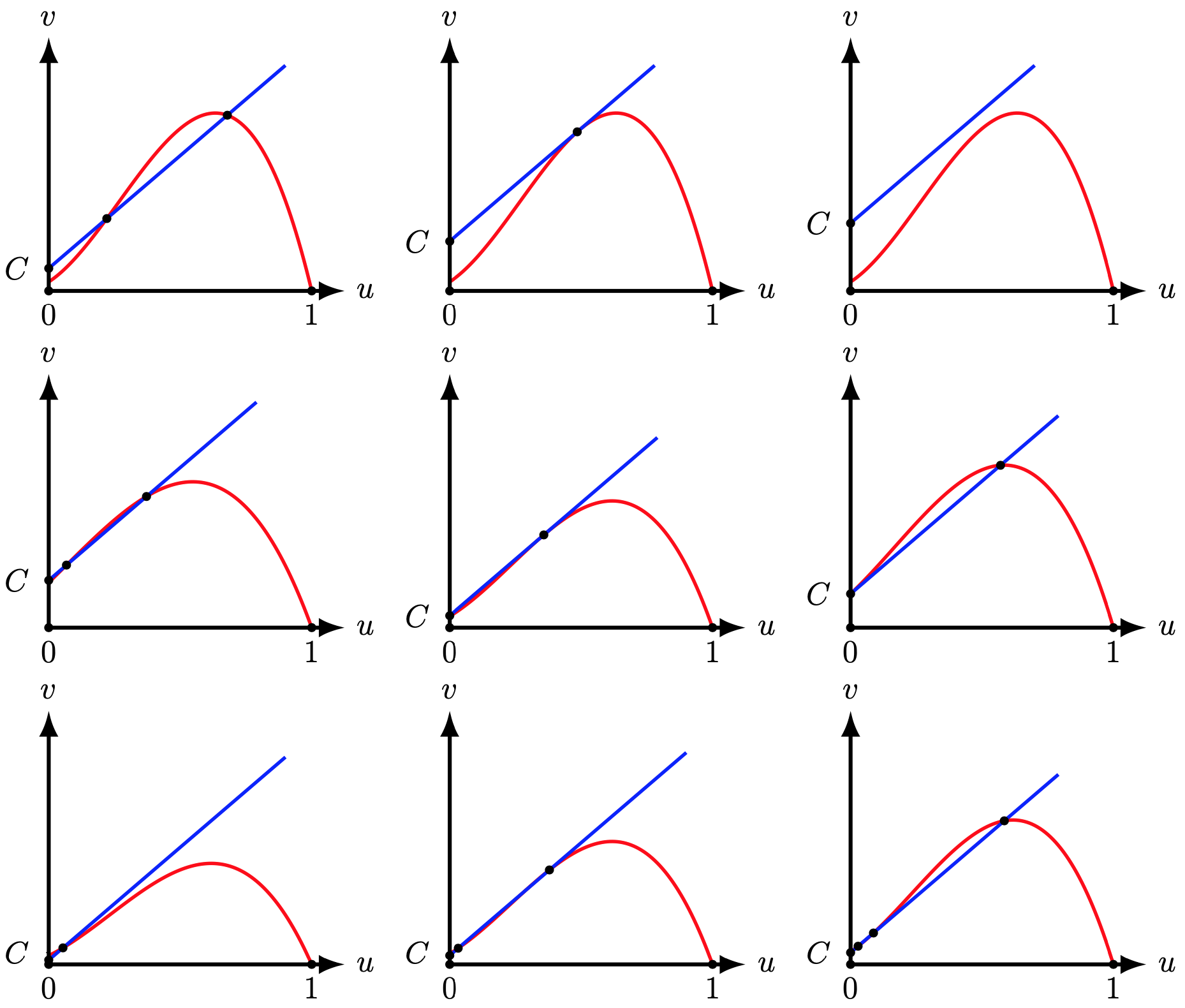}
\caption{Intersection of the function $g\left(u\right)=\left(u+A\right)\left(1-u\right)\left(u-M\right)/Q$ (red) and the straight line $h\left(u\right)=u+C$ (blue) for system~\eqref{eq3} affected by weak Allee effect, i.e $M<0$. In the top row I consider the case of $C>-AM/Q$~\eqref{case1}, while in the middle row the case of $C=-AM/Q$~\eqref{case2} and in the bottom row the case of $C<-AM/Q$~\eqref{case3}.} \label{Fig6}
\end{figure}

Next, I study cases~\ref{3.a} and~\ref{3.b} in which equation~\eqref{eq4} can always has one positive root namely $W$. I divide again the cubic equation~\eqref{eq4} by $\left(u-W\right)$, I obtain the second order polynomial
\begin{equation}\label{mhtwwae7}
u^2+u\left(A+W-M-1\right)+ \left(M+Q-A\left(M+1\right)+W\left(A+W-M-1\right)\right)=0,
\end{equation}
As a result, I get that $Q=\left(W-1\right)\left(W-M\right)\left(A+W\right)/\left(W+C\right)$ and thus, $0<W<1$ and $M<0$. Therefore, if $AM-CQ<0$, $M+1-A>0$ and $A\left(M+1\right)-Q-M<0$ (see case~~\ref{3.b}), then the solutions of equation~\eqref{mhtwwae7} are
\[u_{1}=\dfrac{1}{2}\left(W-A+M+1-\sqrt{\Delta}\right) \quad \text{and}\quad u_{2}=\dfrac{1}{2}\left(W-A+M+1+\sqrt{\Delta}\right).\] 
with $\Delta=\left(W-A+M+1\right)^2-4\left(M+Q-A\left(M+1\right)+W\left(A+W-M-1\right)\right)$.
Therefore, system~\eqref{eq4} has three positive equilibrium points $\left(W,W+C\right)$, $P_1=\left(u_1,u_1+C\right)$ and $P_2=\left(u_2,u_2+C\right)$.  

\subsection{Nature of equilibrium points}

To determine the nature of the equilibrium points of system~\eqref{eq3} with weak Allee effect, i.e $M<0$, I consider the Jacobian matrix~\eqref{eq7} which is given by
\begin{equation*}
J\left(u,v\right)=\begin{pmatrix}
-5u^4+4\left(M-C-A+1\right)u^3+\beta  & -Qu\left(u+C\right) \\ 
Sv\left(A+C+2u-v\right)  &  S\left(C+u-2v\right)\left(A+u\right) 
\end{pmatrix}.
\end{equation*}
With $\beta=3\left(A+C-M-AC+AM+CM\right)u^2+2\left(AC-AM-CM-Qv+ACM\right)u-C\left(AM+Qv\right)$.
\begin{lemm}
The equilibrium point $\left(0,0\right)$ is a repeller point and $\left(1,0\right)$ is a saddle point.
\end{lemm}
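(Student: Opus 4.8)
The plan is to mirror the argument already carried out for the strong Allee effect, evaluating the Jacobian matrix~\eqref{eq7} at each of the two boundary equilibria. The key structural fact is that both $(0,0)$ and $(1,0)$ lie on the invariant $v$-axis (where $v=0$), so in each case the off-diagonal entries simplify drastically and the matrix becomes triangular, making the eigenvalues immediate.

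First I would substitute $(u,v)=(0,0)$ into~\eqref{eq7}. Every term of the $(1,1)$ entry except the constant $-C(AM+Qv)$ carries a factor of $u$, and both off-diagonal entries $-Qu(u+C)$ and $Sv(A+C+2u-v)$ vanish, so the matrix reduces to the diagonal form
\[
J(0,0)=\begin{pmatrix} -ACM & 0 \\ 0 & ACS \end{pmatrix},
\]
whose eigenvalues are $-ACM$ and $ACS$. The only difference from the strong Allee case is the sign of $M$: here $M<0$ forces $-ACM>0$, while $ACS>0$ always holds, so both eigenvalues are positive and $(0,0)$ is a repeller --- in contrast to the saddle it was for $M>0$.

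Next I would evaluate~\eqref{eq7} at $(u,v)=(1,0)$. Again $v=0$ annihilates the $(2,1)$ entry, leaving an upper-triangular matrix; the same bookkeeping as in the strong Allee lemma yields $(1,1)$ entry $-(1-M)(C+1)(A+1)$ and $(2,2)$ entry $S(C+1)(A+1)$, so that $\det\left(J(1,0)\right)=-S(1-M)(C+1)^2(A+1)^2$. Since $M<0$ gives $1-M>1>0$, this determinant is negative, which forces the two real eigenvalues to have opposite signs; hence $(1,0)$ is a saddle point, exactly as in the strong case.

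The computation is entirely routine --- there is no genuine obstacle, because the triangular structure on the invariant axis makes the eigenvalues explicit. The only point requiring care is tracking the sign of $M$: it is precisely the passage from $M>0$ to $M<0$ that converts $(0,0)$ from a saddle into a repeller, while leaving $(1,0)$ a saddle because the factor $1-M$ stays positive throughout.
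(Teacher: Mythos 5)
Your proposal is correct and follows essentially the same route as the paper: evaluate the Jacobian~\eqref{eq7} at each boundary equilibrium, exploit the resulting diagonal (at $\left(0,0\right)$) and upper-triangular (at $\left(1,0\right)$) structure, and read off the signs of the eigenvalues, with $M<0$ flipping $\left(0,0\right)$ from saddle to repeller while $1-M>0$ keeps $\left(1,0\right)$ a saddle. In fact your sign justification at $\left(1,0\right)$ is slightly cleaner than the paper's, which still cites the condition $0<M<1$ carried over from the strong Allee case; the only blemish is your calling the set $v=0$ the ``$v$-axis'' when it is the $u$-axis, which does not affect the argument.
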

\begin{proof}
The Jacobian matrix~\eqref{eq7} evaluate at the equilibrium point $\left(0,0\right)$ gives
\[ J\left(0,0\right)=\begin{pmatrix} -ACM  & 0 \\ 0  &  ACS \end{pmatrix}.\]
Hence, $\det\left(J\left(0,0\right)\right)=-A^2C^2MS>0$ and $\tr\left(J\left(0,0\right)\right)=-ACM+ACS>0$, since $M<0$. Therefore, the equilibrium $\left(0,0\right)$ is a repeller point.
Similarly, the Jacobian matrix~\eqref{eq7} evaluate at the equilibrium point $\left(1,0\right)$ gives
\[ J\left(1,0\right)=\begin{pmatrix} -\left(1-M\right)\left(C+1\right)\left(A+1\right)  & -Q\left(C+1\right) \\  0  &  S\left(C+1\right)\left(A+1\right) \end{pmatrix}.\]
Hence, $\det\left(J\left(1,0\right)\right)=-S\left(1-M\right)\left(C+1\right)^2\left(A+1\right)^2<0$ since $0<M<1$. Therefore, the equilibrium $\left(1,0\right)$ is a saddle point.
\end{proof}
\begin{theo}\label{0C}
The equilibrium point $\left(0,C\right)$ is
\begin{enumerate}
\item a saddle point if $C<-\dfrac{AM}{Q}$,
\item a saddle-node if $C=-\dfrac{AM}{Q}$, and
\item an attractor point if $C>-\dfrac{AM}{Q}$.
\end{enumerate}
\end{theo}
\begin{proof}
The Jacobian matrix~\eqref{eq7} evaluate at the equilibrium point $\left(0,C\right)$ gives
\[ J\left(0,C\right)=\begin{pmatrix} -C\left(AM+CQ\right)  & 0 \\ ACS  &  -ACS \end{pmatrix}.\]
Hence, $\det\left(J\left(0,C\right)\right)=AC^2S\left(AM+CQ\right)$ and $\tr\left(J\left(0,C\right)\right)=-C\left(AS+AM+CQ\right)$. Therefore, if $C<-AM/Q$, then $\det\left(J\left(0,C\right)\right)<0$ and thus the equilibrium point $\left(0,C\right)$ is a saddle point. Moreover, if $C>-AM/Q$, then $\det\left(J\left(0,C\right)\right)>0$ and $\tr\left(J\left(0,C\right)\right)=-C\left(AS+AM+CQ\right)>0$, since $M<0$ and $C>-AM/Q$. Therefore, the equilibrium point $\left(0,C\right)$ is an attractor point. Finally, if $C=-AM/Q$, then the equilibrium points $\left(0,C\right)$ and $P_1$ collapse, see Theorem~\ref{0C}. The center manifold theorem~\cite{perko} will be used to prove the stability of the singularity $\left(0,C\right)$ when $C=-AM/Q$.
	
Setting $\left(u,v\right)\rightarrow\left(X,Y+C\right)$, I move the equilibrium point $\left(0,C\right)$ of system~\eqref{eq3} to the origin. Therefore, I obtain the equivalent system
\begin{equation}\label{mhtwwaec1}
\begin{aligned}
\dfrac{dX}{dt} &=X\left( \left(1-X\right)\left(X + A\right)\left(X-M\right)-Q\left(Y+C\right)\right)\left(X+C\right),\\
\dfrac{dY}{dt} &=S\left(X-Y\right)\left(X+A\right)\left(Y+C\right).
\end{aligned}
\end{equation}
The diagonal form of a two--dimensional system can be written by
\begin{equation*}
\begin{aligned}
\dfrac{dx}{dt} &=\delta x+\Phi\left(x,y\right),\\
\dfrac{dy}{dt} &=\epsilon y+\Psi\left(x,y\right).
\end{aligned}
\end{equation*}
In addition, the flow on the center manifold is defined by the system of differential equation
\begin{equation}\label{mhtwwaec3}
\dfrac{dx}{dt}=\varepsilon x+\Phi\left(x,h\left(x\right)\right)
\end{equation}
Thus, system~\eqref{mhtwwaec1} can be  written by
\[\begin{aligned}
\dfrac{dX}{d\tau} =&-C\left(AM+CQ\right)X +\left(AC-AM-CM-CQ-QY+ACM\right)X^2\\
&+\left(A+C-M-AC+AM+CM\right) X^3 +\left(M-C-A+1\right)X^4-X^5-CQXY,\\
\dfrac{dY}{d\tau} =&-ACSY+CSX^2-ASY^2-SXY^2+SX^2Y+ACSX+ ASXY-CSXY.
\end{aligned}\]
So, we have that 
\[\begin{aligned}
\delta =&-C\left(AM+CQ\right), \\
\epsilon =&-ACS, \\
\Phi\left(X,Y\right) =&+\left(AC-AM-CM-CQ-QY+ACM\right)X^2+\left(A+C-M-AC+AM+CM\right) X^3\\
& +\left(M-C-A+1\right)X^4-X^5-CQXY\\
\Psi\left(X,Y\right) =&CSX^2-ASY^2-SXY^2+SX^2Y+ACSX+ ASXY-CSXY.
\end{aligned}\]
Considering the function $h\left(X\right)$ as the local center manifold defined by 
\begin{equation}\label{mhtwwaec4}
h\left(X\right)=aX^2+bX^3+cX^4+0\left(X^5\right)
\end{equation}
and 
\begin{equation}\label{mhtwwaec5}
Dh\left(X\right)=2aX+3bX^2+4cX^3+0\left(X^4\right).
\end{equation}
In addition, the function $h\left(X\right)$ satisfies 
\begin{equation}\label{mhtwwaec6}
Dh\left(X\right)\left(\delta X+\Phi\left(X,h_5\left(X\right)\right)\right)-\left(\epsilon h\left(X\right)+\Psi\left(X,h\left(X\right)\right)\right)=0
\end{equation}
Thus, replacing~\eqref{mhtwwaec4} and~\eqref{mhtwwaec5} into equation~\eqref{mhtwwaec6} and setting the coefficients $a$, $b$, and $c$ solving the equation~\eqref{mhtwwaec6}, we have that 
\[\begin{aligned}
a=& \dfrac{1}{A},\\
b=&-\dfrac{2AC-2CM-AS+CS+2ACM}{A^2CS},\\
c=&\dfrac{6C^2M^2\left(A^2-2A+1\right)+CM\zeta+A^2\left(2C^2S+6C^2-7CS+S^2\right)+CS\left(3AC-2AS+CS\right)}{A^3C^2S^2}.
\end{aligned}\]
With $\zeta=12A^2C-5CS-12AC+3ACS-7A^2S+5AS$. Therefore, 
\[\begin{aligned}
h\left(X\right)=&\dfrac{1}{A}X^2+\dfrac{2AC-2CM-AS+CS+2ACM}{A^2CS}X^3\\
&+\dfrac{6C^2M^2\left(A^2-2A+1\right)+CM\zeta_8+A^2\eta_8+CS\left(3AC-2AS+CS\right)}{A^3C^2S^2}X^4+0\left(X^5\right).
\end{aligned}\]
Thus, replacing $h\left(X\right)$ in equation~\eqref{mhtwwaec3}; we have that the flow on the centre manifold is
\begin{equation*}\label{mhtwwaec7}
\dfrac{dX}{d\tau}=\dfrac{1}{A^3C^2S^2}\left(\vartheta X^2+\iota X^3-\kappa X^4+\nu X^5+\xi X^6+\mathcal{O}\left(X^7\right)\right).
\end{equation*}
	With\\
	\[\begin{aligned}
	\vartheta=& A^2C^4S^2\left(1-A\right)\\
	\iota=& A^2C^3S^2\left(2-A-C\right)\\
	\kappa=&AC^2S\left(2AC-2CM-AS+CS+2ACM+ACS-AMS\right)\\
	\nu=&C\left(6A^2C^2+6C^2M^2+A^2S^2+C^2S^2+6A^2C^2M^2-12AC^2M-2ACS^2+3AC^2S-7A^2CS-5C^2MS\right.\\
	&-12AC^2M^2+12A^2C^2M+2A^2C^2S+A^2MS^2-ACMS^2+2ACM^2S+3AC^2MS-9A^2CMS\\
	&\left.-2A^2CM^2S+5ACMS\right)\\
	\xi=&M\left(6A^2C^2M^2+12A^2C^2M+2A^2C^2S+6A^2C^2-7A^2CMS-7A^2CS+A^2S^2-12AC^2M^2+3AC^2MS\right.\\
	&\left.-12AC^2M+3AC^2S+5ACMS-2ACS^2+6C^2M^2-5C^2MS+C^2S^2\right)
	\end{aligned}\]
Considering that series expansion of the function $h\left(X\right)$, it also is approximate the shape of the local center manifold. Therefore, we have that the point $\left(0,C\right)$ is saddle-node.
\end{proof}

First, I recall that the determinant~\eqref{eq9} and the trace~\eqref{eq10} of the Jacobian matrix~\eqref{eq8} are given by
\[\begin{aligned}
\det\left(J\left(u,u+C\right)\right) =& Su\left(A+u\right)\left(C+u\right)^2\left(Q-J_{11}\left(u\right)\right)	\,,\\
\tr\left(J\left(u,u+C\right)\right) 	=& \left(C+u\right)\left(uJ_{11}-S\left(A+u\right)\right)\,.
\end{aligned}\]
With $J_{11}\left(u\right)=A-M+2u+AM-2Au+2Mu-3u^2$. Note that the signs of the determinant~\eqref{eq9} depends on the value of $Q-J_{11}\left(u\right)$ and the signs of the trace~\eqref{eq10} depends on the value of $uJ_{11}\left(u\right)-S\left(A+u\right)$. 

Next, I discuss the stability of the equilibrium points $P_1$ and $P_2$ of cases~\ref{1.a} and~\ref{2.a}. The stability of these points are the same as the stability of the equilibrium point showed in Theorems~\ref{p1} and~\ref{p2}. That is, Theorems~\ref{p1} and~\ref{p2} also holds for weak Allee effect ($M<1$). Moreover, in case~\ref{2.b} the equilibrium point $P_1$ crosses to the second or third quadrant and thus the only positive equilibrium point is $P_2$. In this case the stability of the equilibrium point $P_2$ is the same as the stability showed in Theorem~\ref{p2}. In cases~\ref{1.b} and~\ref{2.c} system\eqref{eq3} has no positive equilibrium points in the first quadrant. Therefore, the equilibrium point $\left(0,C\right)$ is global attractor.

On the other hand, if $M+1-A\geq0$ and $A\left(M+1\right)-Q-M\geq0$ or $M+1-A\leq0$ and $A\left(M+1\right)-Q-M<0$ or $M+1-A<0$ and $A\left(M+1\right)-Q-M>0$, then system~\eqref{eq3} has only one positive equilibrium point in the first quadrant, which I denote by $\left(W,W+C\right)$ where $0<W<1$.   

\begin{lemm}
Let the system parameters of system~\eqref{eq3} be such that $M<0$, $0<C<-AM/Q$ and the conditions of case~\ref{3.a} are met. Then system~\eqref{eq3} has only one positive equilibrium point $(W,W+C)$ which is a stable node.
\end{lemm}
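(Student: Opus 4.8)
The plan is to read off the stability of $(W,W+C)$ directly from the determinant~\eqref{eq9} and trace~\eqref{eq10} of the Jacobian~\eqref{eq8}, exploiting a single identity between the $(1,1)$-entry factor $J_{11}$ and the slope of the prey nullcline $v=g\left(u\right)=\left(u+A\right)\left(1-u\right)\left(u-M\right)/Q$. A direct expansion gives $Qg'\left(u\right)=-3u^2+2\left(M+1-A\right)u+\left(A+AM-M\right)=J_{11}\left(u\right)$, so that $Q-J_{11}\left(u\right)=Q\left(1-g'\left(u\right)\right)=f'\left(u\right)$, where $f$ is the cubic in~\eqref{eq4}. This observation is what controls both signs I need.

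First I would settle the determinant. By~\eqref{eq9}, $\det\left(J\left(W,W+C\right)\right)=SW\left(A+W\right)\left(C+W\right)^2\left(Q-J_{11}\left(W\right)\right)=SW\left(A+W\right)\left(C+W\right)^2 f'\left(W\right)$. In case~\ref{3.a} the constant term of $f$ equals $AM+CQ<0$, since $C<-AM/Q$ and $M<0$, while $f\left(1\right)=Q\left(1+C\right)>0$; because $W\in\left(0,1\right)$ is the only positive root, $f$ crosses from negative to positive there and hence $f'\left(W\right)>0$. As $S,W,A+W,\left(C+W\right)^2>0$, this yields $\det\left(J\left(W,W+C\right)\right)>0$, so $(W,W+C)$ is not a saddle.

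Next I would treat the trace. By~\eqref{eq10}, $\tr\left(J\left(W,W+C\right)\right)=\left(C+W\right)\left(W J_{11}\left(W\right)-S\left(A+W\right)\right)=\left(C+W\right)\left(WQg'\left(W\right)-S\left(A+W\right)\right)$, so everything hinges on the sign of $g'\left(W\right)$. Geometrically, since $g\left(0\right)=-AM/Q>C$ and $g\left(1\right)=0<1+C$, the nullcline meets the line $v=u+C$ from above; under the case~\ref{3.a} inequalities the unique crossing lies to the right of the hump of $g$, i.e. past the positive root of $Qg'\left(u\right)=0$, so $g'\left(W\right)<0$. With $g'\left(W\right)<0$ both summands in the bracket are negative, giving $\tr\left(J\left(W,W+C\right)\right)<0$ for \emph{every} $S>0$; together with $\det>0$ the point is locally asymptotically stable. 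To upgrade this to a node I would check $\tr^2-4\det\ge0$ so the eigenvalues are real, and then, using the boundedness of all orbits together with the fact that in case~\ref{3.a} the point $\left(0,C\right)$ is a saddle (Theorem~\ref{0C}) rather than an attractor, invoke the Poincar\'e--Bendixson theorem to conclude global attraction in the first quadrant when no limit cycle is present.

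The \emph{main obstacle} is the step asserting $g'\left(W\right)<0$, that is, locating $W$ on the decreasing branch of $g$: this is exactly what distinguishes case~\ref{3.a} (one crossing) from case~\ref{3.b} (up to three), so it must be extracted from the sign conditions on $M+1-A$ and $A\left(M+1\right)-Q-M$, since the from-above-to-below crossing by itself only gives $g'\left(W\right)<1$ (enough for $\det>0$, but not for the trace). I expect the cleanest route is to compare $W$ with the positive critical point of $g$ through the factorisation $f'\left(W\right)=\left(W-u_1\right)\left(W-u_2\right)$ coming from the quotient polynomial~\eqref{mhtwwae7}, which simultaneously feeds the node-versus-focus discriminant; pinning down this slope is where the real content of the lemma lies, as otherwise the trace would change sign with $S$ exactly as in Theorem~\ref{p2}.
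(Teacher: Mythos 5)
Your determinant argument is correct and in fact cleaner than the paper's: the identity $Q-J_{11}\left(u\right)=Q\left(1-g'\left(u\right)\right)=f'\left(u\right)$ does hold (with the sign convention for $J_{11}$ used in the weak-Allee section), $f\left(0\right)=AM+CQ<0$ because $C<-AM/Q$, and $f\left(1\right)=Q\left(1+C\right)>0$, so the unique positive root $W$ is a simple upward crossing, $f'\left(W\right)>0$, and $\det\left(J\left(W,W+C\right)\right)>0$. The paper instead establishes $Q-J_{11}\left(W\right)>0$ by three separate rewritings of~\eqref{eq4}, one for each sign pattern in case~\ref{3.a}; your single monotonicity argument subsumes all three. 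However, the step you yourself flagged as the main obstacle --- proving $g'\left(W\right)<0$ from the sign conditions on $M+1-A$ and $A\left(M+1\right)-Q-M$ --- is a genuine gap, and it cannot be closed, because the claim is false in part of the region covered by case~\ref{3.a}. Take $M=-0.1$, $A=0.92$, $Q=1$, $C=0.05$: then $M+1-A=-0.02\leq0$ and $A\left(M+1\right)-Q-M=-0.072<0$ (the second alternative of case~\ref{3.a}), and $C<-AM/Q=0.092$. Here $f\left(u\right)=u^3+0.02u^2+0.072u-0.042$ has $f'>0$ everywhere, hence a unique root $W\approx0.274$, and $Qg'\left(W\right)=-3W^2+2\left(M+1-A\right)W+A\left(1+M\right)-M\approx0.69>0$: the line meets $g$ on its \emph{rising} branch with slope in $\left(0,1\right)$. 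The trace bracket $WQg'\left(W\right)-S\left(A+W\right)\approx0.19-1.19S$ is then positive for all $S<0.15$, so $\left(W,W+C\right)$ is a repeller there ($\det>0$, $\tr>0$), and the stability genuinely depends on $S$, exactly as you feared by analogy with Theorem~\ref{p2}.

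For what it is worth, the paper's own proof stumbles at the very same point: after using $f\left(W\right)=0$ to reduce the trace bracket to $-W^3-\left(A\left(1+M\right)-M-2Q\right)W+2\left(AM+CQ\right)-S\left(A+W\right)$, it asserts $A\left(1+M\right)-M-2Q>0$ without justification; in the example above this quantity equals $-1.072$. So you have correctly isolated the one inequality on which everything hinges, but neither your geometric route ($W$ past the hump of $g$) nor the paper's algebraic route follows from the hypotheses of case~\ref{3.a} alone. A repaired statement must either add a hypothesis (e.g.\ $J_{11}\left(W\right)\leq0$, or the explicit bound $S\geq WQg'\left(W\right)/\left(A+W\right)$) or weaken the conclusion to local stability for sufficiently large $S$. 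Your remaining points --- checking $\tr^2-4\det\geq0$ to distinguish node from focus (which the paper also omits) and the Poincar\'e--Bendixson argument using the saddle character of $\left(0,C\right)$ when $C<-AM/Q$ --- are sensible but moot until the sign of the trace is actually controlled.
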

\begin{proof}
Evaluating $Q-J_{11}\left(u\right)$~\eqref{eq9} at $W$ gives
\[Q-J_{11}\left(W\right)=-(A(1+M)-Q-M)-2W(M+1-A)+3W^2.\]
Hence, it is clear that if $A(1+M)-Q-M\leq0$ and $M+1-A\leq0$, then $Q-J_{11}\left(W\right)>0$ and thus $\det\left(J\left(W,W+C\right)\right)>0$. Moreover, rewriting equation~\eqref{eq4} as $W=W(M+1-A)+(A(M+1)-Q-M)-(AM+CQ)/W$, then $Q-J_{11}\left(W\right)$ become
\[Q-J_{11}\left(W\right)=2(A(1+M)-M-Q)+W(M+1-A)-\dfrac{3(AM+CQ)}{W}.\]
Hence, it is also clear that if $A(1+M)-Q-M\geq0$ and $M+1-A\geq0$, then $Q-J_{11}\left(W\right)>0$ and thus $\det\left(J\left(W,W+C\right)\right)>0$. Similarly,  I can also rewriting equation~\eqref{eq4} as $A(M+1)-Q-M=W^2-(M+1-A)W+(AM+CQ)/W$, then $Q-J_{11}\left(W\right)$ now become
\[Q-J_{11}\left(W\right)=-W(M+1-A)+2W^2-\dfrac{AM+CQ}{W}.\]
Hence, it is clear again that if $A(1+M)-Q-M>0$ and $M+1-A<0$, then $Q-J_{11}\left(W\right)>0$ and thus $\det\left(J\left(W,W+C\right)\right)>0$.
Then, the behaviour of the equilibrium point $(W,W+C)$ depends on the trace~\eqref{eq10} of the Jacobian matrix~\eqref{eq8} at the equilibrium point $(W,W+C)$. Evaluating $uJ_{11}\left(u\right)-S\left(A+u\right)$~\eqref{eq10} at $W$ gives
\[uJ_{11}\left(W\right)-S\left(A+W\right)=-W^3-W(A(1+M)-M-2Q)+2(AM+CQ)-S(A+W)<0.\]
Since $A(1+M)-M-2Q>0$ and $AM+CQ<0$. Therefore, the equilibrium point is always a stable node, see Figure~\ref{Fig10}.
\end{proof}

\begin{figure}
\centering
\includegraphics[width=8cm]{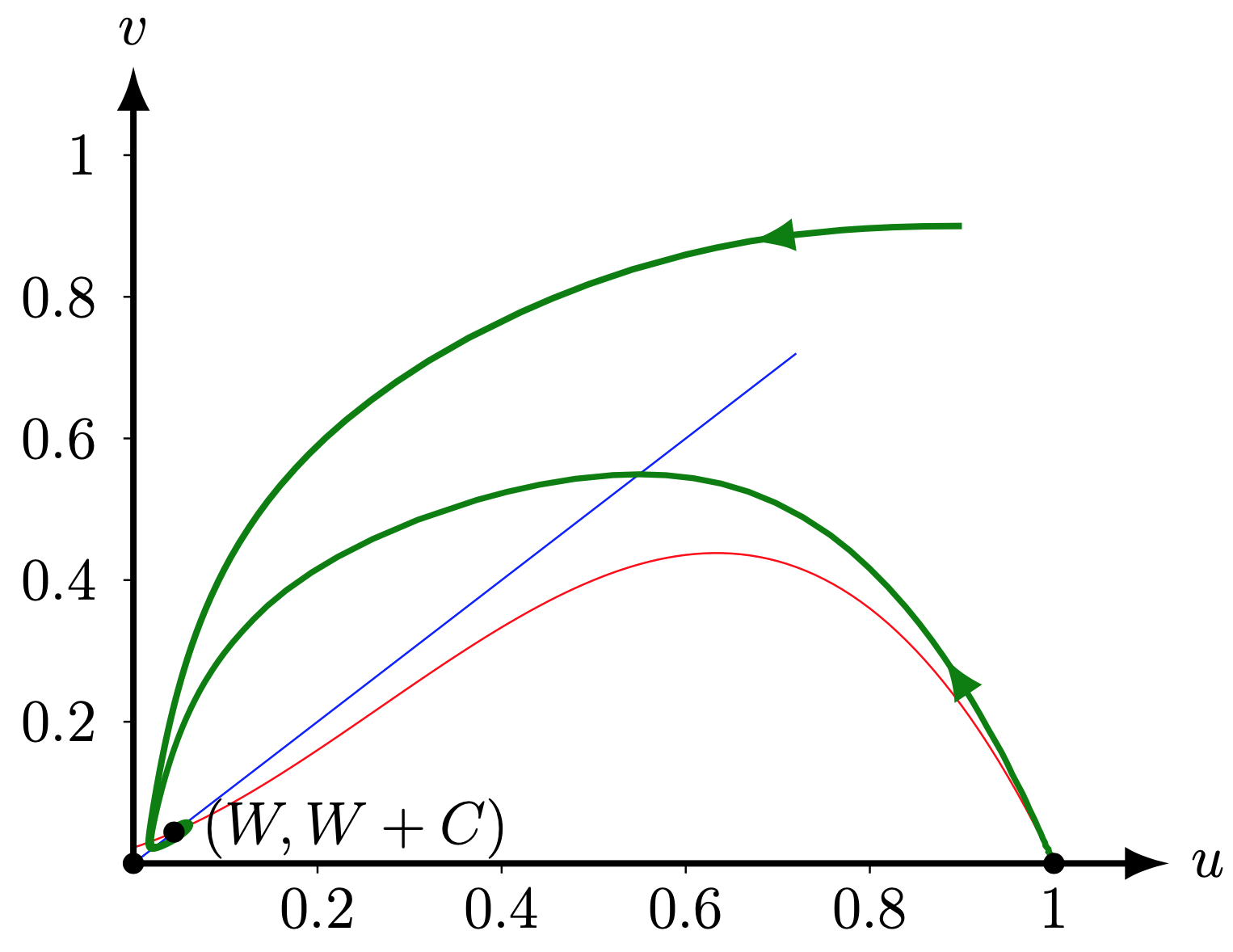}
\includegraphics[width=8cm]{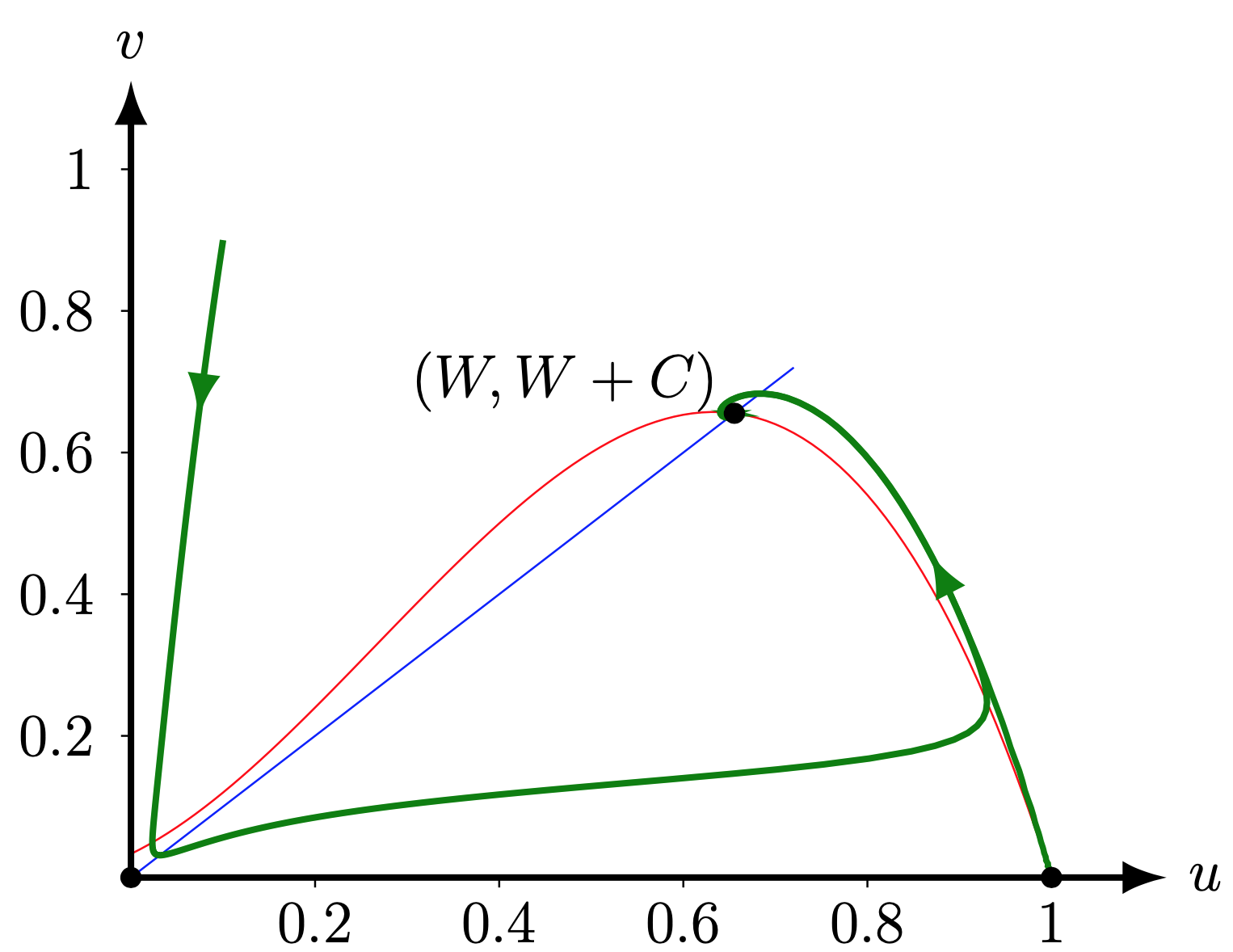}
\caption{The blue (red) curve represents the predator (prey) nullcline. If $A=0.4$; $Q=0.53$; $M=-0.1$ and $C=0.06$ are fixed, then system~\eqref{eq3} has one positive equilibrium point $(W,W+C)$. Moreover, in the left panel ($S=0.15$) and in the right panel ($S=0.25$) the equilibrium point $(W,W+C)$ is a stable node.}\label{Fig10}
\end{figure}

Next, I study case~\ref{3.b} ($M<0$, $C<-AM/Q$, $M+1-A>0$ and $A(M+1)-Q-M<0$) in which system~\eqref{eq3} has three equilibrium point in the first quadrant namely $(W,W+C)$, $P_1=(u_1,u_1+C)$ and $P_2=(u_2,u_2+C)$ with $u_{1,2}$ defined in~\eqref{eq6}. 

\begin{theo}
Let the system parameters of system~\eqref{eq3} be such that $M<0$, $C<-AM/Q$, $M+1-A>0$ and $A(M+1)-Q-M<0$ (see case~\ref{3.a}). Then, $(W,W+C)$ is\\
\begin{enumerate} 
\item a stable node if $W^3-(Q+M-A(M+1))W-2(AM+CQ)<0$ and \\
 $S>\dfrac{3(AM+CQ)+W((A-M-1)W+2(Q+M-A(1+M))-Q)}{W+A}$;
\item an unstable node if $W^3-(Q+M-A(M+1))W-2(AM+CQ)<0$ and \\
$S<\dfrac{3(AM+CQ)+W((A-M-1)W+2(Q+M-A(1+M))-Q)}{W+A}$;
\item a center if $W^3-(Q+M-A(M+1))W-2(AM+CQ)<0$ and \\
$S=\dfrac{3(AM+CQ)+W((A-M-1)W+2(Q+M-A(1+M))-Q)}{W+A}$;
\item a saddle point if $W^3-(Q+M-A(M+1))W-2(AM+CQ)>0$.
\end{enumerate} 
\end{theo}
\begin{proof}
Evaluating $Q-J_{11}\left(u\right)$~\eqref{eq9} at $W$ gives $Q-J_{11}\left(u\right)=W^3+(A(M+1)-Q-M)W-2(AM+CQ)$. Hence, the behaviour of the equilibrium point $(W,W+C)$ depends on the behaviour of the equilibrium point depends on the value of  $Q-J_{11}\left(W\right)$. If $Q-J_{11}\left(W\right)<0$, then $\det\left(J\left(W,W+C\right)\right)<0$ and thus the equilibrium point $(W,W+C)$ is a saddle point. While, if $Q-J_{11}\left(W\right)>0$, then $\det\left(J\left(W,W+C\right)\right)>0$ and thus the behaviour of the equilibrium point $(W,W+C)$ depends on the trace~\eqref{eq10} of the Jacobian matrix~\eqref{eq8} at the equilibrium point $(W,W+C)$. Which is given by $$\tr(J(W,W+C))=(C+W)(3(AM+CQ)-W((M+1-A)W-2(A(M+1)-Q-M+Q)-S(A+W)).$$ Hence, there are parameter values such that $\tr(J(W,W+C))>0$, $\tr(J(W,W+C))=0$ or $\tr(J(W,W+C))<0$.
\end{proof}

\begin{theo}
Let the system parameters of system~\eqref{eq3} be such that $M<0$, $C<-AM/Q$, $M+1-A>0$ and $A(M+1)-Q-M<0$ (see case~\ref{3.a}). Then, $P_1$ is
\begin{enumerate} 
\item a stable node if $W-u_1>0$ and $S>\dfrac{4u_{1}( Q+(u_{1}-W)\sqrt{\Delta})}{A+u_{1}}$,
\item an unstable node if $W-u_1>0$ and $S<\dfrac{4u_{1}( Q+(u_{1}-W)\sqrt{\Delta})}{A+u_{1}}$,
\item a centre if $W-u_1>0$ and $S=\dfrac{4u_{1}( Q+(u_{1}-W)\sqrt{\Delta})}{A+u_{1}}$,
\item a saddle if $W-u_1<0$.
\end{enumerate} 
\end{theo}
\begin{proof}
Evaluating $Q-J_{11}\left(u\right)$~\eqref{eq9} at $u_1$ gives $Q-J_{11}\left(u\right)=(W-u_{1})$. Hence, if $W<u_{1}$ then $\det\left(J\left(P_1\right)\right)<0$ and thus the equilibrium point $P_1$ is a saddle point. Moreover, if $W=u_{1}$ then $\det\left(J\left(P_1\right)\right)=0$ and thus the equilibrium point $P_1$ and $\left(W,W+C\right)$ collapse. While, if $W>u_{1}$ then $\det\left(J\left(P_1\right)\right)>0$ and thus the stability of the equilibrium point $P_1$ depends on the trace~\eqref{eq10} of the Jacobian matrix~\eqref{eq8} at the equilibrium point $P_1$. Evaluating $uJ_{11}\left(u\right)-S\left(A+u\right)$~\eqref{eq10} at $u_1$ gives
\[uJ_{11}\left(u\right)-S\left(A+u\right)=\dfrac{1}{2}(4u_{1}( Q+(u_{1}-W)\sqrt{\Delta})-S(A+u_{1})).\]
Therefore, there are parameter values such that $\tr(J(P_1))>0$, $\tr(J(P_1))=0$ or $\tr(J(P_1))<0$.
\end{proof}
\begin{theo}
Let the system parameters of system~\eqref{eq3} be such that $M<0$, $C<-AM/Q$, $M+1-A>0$ and $A(M+1)-Q-M<0$ (see case~\ref{3.a}). Then, $P_2$ is
\begin{enumerate} 
\item a stable node if $u_2-W>0$ and $S>\dfrac{4u_{2}( Q-(u_{2}-W)\sqrt{\Delta})}{A+u_{2}}$,
\item an unstable node if $u_2-W>0$ and $S<\dfrac{4u_{2}( Q-(u_{2}-W)\sqrt{\Delta})}{A+u_{2}}$,
\item a centre if $u_2-W>0$ and $S=\dfrac{4u_{2}( Q-(u_{2}-W)\sqrt{\Delta})}{A+u_{2}}$,
\item a saddle if $u_2-W<0$.
\end{enumerate} 
\end{theo}
\begin{proof}
Evaluating $Q-J_{11}\left(u\right)$~\eqref{eq9} at $u_2$ gives $Q-J_{11}\left(u\right)=(u_{2}-W)$. Hence, if $W>u_{2}$ then $\det\left(J\left(P_2\right)\right)<0$ and thus the equilibrium point $P_2$ is a saddle point. Moreover, if $W=u_{2}$ then $\det\left(J\left(P_2\right)\right)=0$ and thus the equilibrium point $P_2$ and $\left(W,W+C\right)$ collapse. While, if $W<u_{2}$ then $\det\left(J\left(P_2\right)\right)>0$ and thus the stability of the equilibrium point $P_2$ depends on the trace~\eqref{eq10} of the Jacobian matrix~\eqref{eq8} at the equilibrium point $P_2$. Evaluating $uJ_{11}\left(u\right)-S\left(A+u\right)$~\eqref{eq10} at $u_2$ gives
\[uJ_{11}\left(u\right)-S\left(A+u\right)=\dfrac{1}{2}(4u_{2}( Q+(u_{2}-W)\sqrt{\Delta})-S(A+u_{2})).\]
Therefore, there are parameter values such that $\tr(J(P_2))>0$, $\tr(J(P_2))=0$ or $\tr(J(P_2))<0$.
\end{proof}

\begin{theo}\label{L1}
Let the system parameters of system~\eqref{eq3} be such that $M<0$, $C<-AM/Q$, $M+1-A>0$, $A(M+1)-Q-M<0$ and $\Delta=0$. Then, the equilibrium point $P_1$ and $P_2$ collapse and thus $(W,W+C)<P_{1}=P_{2}=(L_{1},L_{1}+C)$ with $L_1=(1-A+W+M)/2$. Moreover, the equilibrium point $(L_{1},L_{1}+C)$ is:
\begin{enumerate} 
\item a saddle-node attractor if $Q>\dfrac{S(M-W+A+1)}{2(M-W-A+1)}$,
\item a saddle-node repeller if $Q<\dfrac{S(M-W+A+1)}{2(M-W-A+1)}$.
\end{enumerate} 
\end{theo}
\begin{proof}
If $\Delta=0$, then the equilibrium points $P_{1}$ and $P_{2}$ collapse and thus $\det\left(J\left(L_1\right)\right)=0$ since $\Delta=0$. Therefore, the stability of the equilibrium point $(L_1,L_1+C)$ depends on the trace~\eqref{eq10} of the Jacobian matrix~\eqref{eq8} at the equilibrium point $L_1$ which is given by
	\[\tr(J(L_{1},L_{1}+C)) = \dfrac{1}{4}(2C+M-W-A+1-P)(2Q(M-W-A+1-P) -S(M-W+A+1-P)).\]
Therefore, the behaviour of the equilibrium point $(L_{1},L_{1}+C)$ depends on the value of $2Q(M-W-A+1-P)-S(M-W+A+1-P)$, see Figure~\ref{Fig11}.
\end{proof}

\begin{figure}
\centering
\includegraphics[width=10cm]{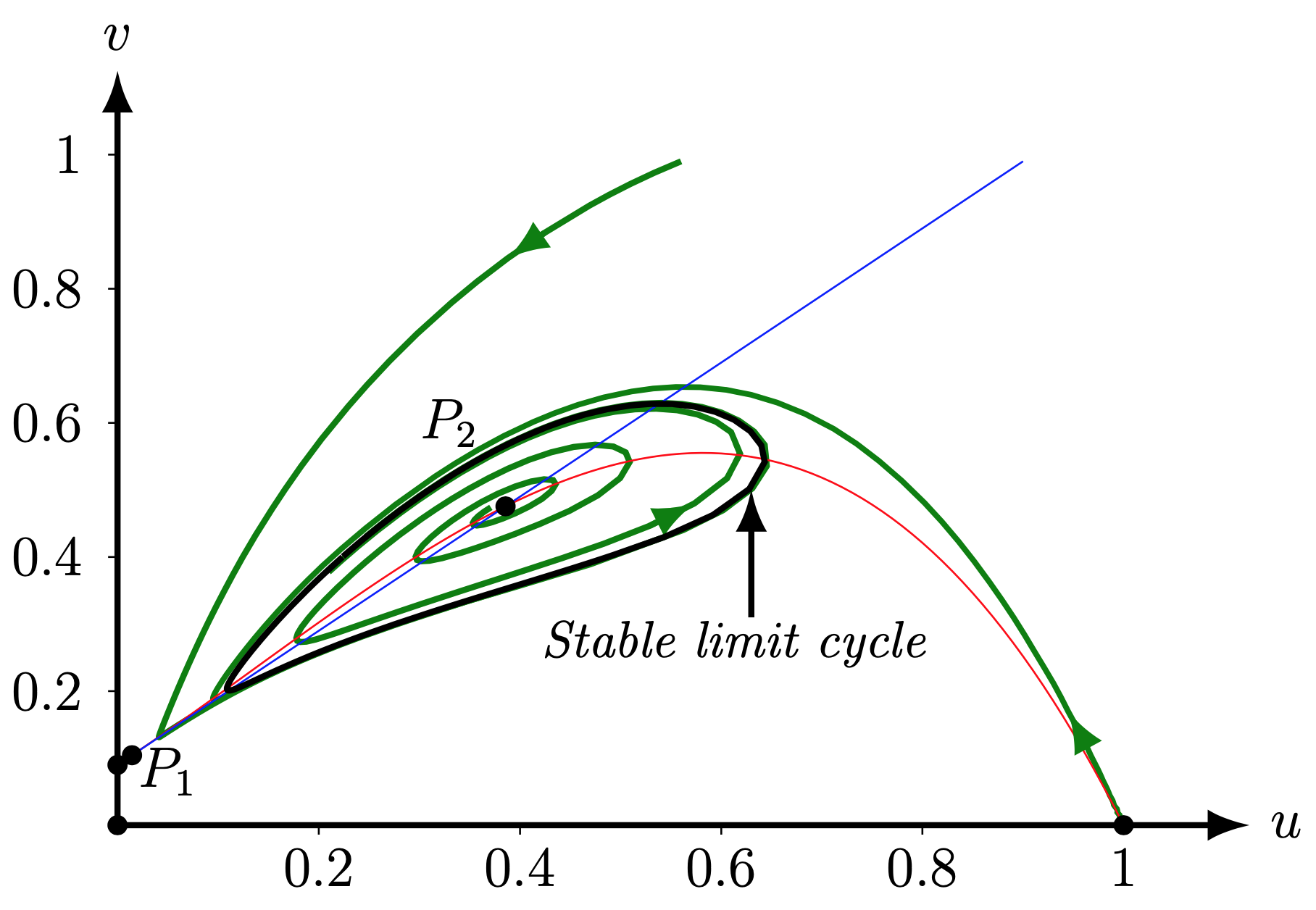}
\caption{If $A=0.5$; $Q=0.5555556$; $M=-0.1$, $S=0.15$ $C=0.09$, then the equilibrium $(1,0)$, $(0,0)$ and $P_{1}$ are saddle points, $(0,C)$ is a saddle-node and $P_{2}$ is unstable node surrounded by a stable limit cycles.}\label{Fig11}
\end{figure}

Note that there are conditions in the system parameter for which the equilibrium points $(W,W+C)$ and $P_{1}$ can collapse and thus $(W,W+C)=P_{1}=L_2<P_{2}$. Moreover, by continuity the equilibrium points $(W,W+C)$ and $P_2$ can also collapse and thus $P_{1}<(W,W+C)=P_{2}=L_3$. Finally, the equilibrium points $P_{1}$ and $P_{2}$ can collapse again and thus $P_{1}=P_{2}=L_4<(W,W+C)$. The stability of the equilibrium points $(L_2,L_2+C)$, $(L_3,L_3+C)$ and $(L_4,L_4+C)$ can be proved by following Theorem~\ref{L1}.

\subsection{Bifurcation Analysis}
In this section I will discuss the bifurcation analysis of system~\eqref{eq3} for $\Delta=0$ and $M<0$. Additionally, if $\Delta=0$ then the equilibrium points $P_{1}$ and $P_{2}$ collapse and thus I have the following cases: $(W,W+C)<P_{1}=P_{2}$ and $P_{1}=P_{2}<(W,W+C)$.

\begin{theo}\label{BT2}
Let the system parameters be such that $\Delta=0$~\eqref{eq6}, $M<0$ and $Q=S(A+W+M+1)/(W-A+M+1)$, then system~\eqref{eq3} undergoes a Bogdanov--Takens bifurcation at the equilibrium point $(L_1,L_1+C)$ (similarly for $(L_4,L_4+C)$).
\end{theo}
\begin{proof}
	If $Q=S(A+W+M+1)/(W-A+M+1)$, then the trace is $\tr\left(J(L_1,L_1+C)\right)=0$ and the Jacobian matrix~\eqref{eq8} at the equilibrium point $(L_1,L_1+C)$ simplified to
	\[\begin{aligned}
	J\left(L_1,L_1+C\right) &=\begin{pmatrix}
	S\left(L_1+A\right)\left(L_1+C\right) & -S\left(L_1+A\right)\left(L_1+C\right) \\ 
	S\left(L_1+A\right)\left(L_1+C\right) & -S\left(L_1+A\right)\left(L_1+C\right) 
	\end{pmatrix}\\
	&=-\dfrac{1}{4}S(W+A+M+1)(W-A+M+1+2C)\begin{pmatrix}
	1 & -1 \\ 
	1 & -1 
	\end{pmatrix}. 
	\end{aligned}\]
	Now, I find the Jordan normal form of $J\left(L_1,L_1+C\right)$ which has equal eigenvalues and a unique eigenvector
	$\begin{pmatrix}
	1 \\ 
	1 
	\end{pmatrix}$. This vector will be the first column of the matrix of transformations $\Upsilon$. To obtain the second column I choose a vector that makes the matrix $\Upsilon$, that is $\begin{pmatrix}
	-1 \\ 0 \end{pmatrix}$. Thus,
	\[\Upsilon=\begin{pmatrix} 1 & -1 \\ 1 & 0 \end{pmatrix}~\text{and}~\Upsilon^{-1}\left(J\left(L_1,L_1+C\right)\right)\Upsilon=\begin{pmatrix}0 & -\dfrac{1}{4}S(A-W+M+1)(A+W-M-1) \\ 0 & 0 \end{pmatrix}.\]
	Hence, I have the Bogdanov--Takens bifurcation or bifurcation of codimension 2~\cite{xiao2}. Thus, the point $\left(L_1,L_1+C\right)$ is a cusp point.
\end{proof} 

\begin{theo}
Let the system parameters be such that $\Delta=0$~\eqref{eq6} and $M<0$, then system~\eqref{eq3} undergoes a saddle-node bifurcation at the equilibrium point $(L_{1},L_{1}+C)$ (similarly for $(L_{2},L_{2}+C)$, $(L_{3},L_{3}+C)$ and $(L_{4},L_{4}+C)$).
\end{theo}
\begin{proof}
We will proved that the system~\eqref{eq3} has a saddle-node bifurcation at $Q=S(M-W+A+1)/(2(M-W-A+1))$ based on Sotomayor's theorem~\cite{perko}. For $\Delta=0$ the points $P_{1}$ and $P_{2}$ collapse and $W< u_{1} = u_{2} $. Thus, there are two equilibrium points in the first quadrant. Those are $(W,W+C)$ and $(L_{1},L_{2}+C)$, with $L_{1}=(1+M-A+W)/2$. Moreover, setting the dynamical system~\eqref{eq3} by a vector form given by
	\begin{equation} \label{mhtwwae15}
	f(u,v;Q) =\begin{pmatrix}
	(u+A)(1-u)(u-M)-Qv\\ 
	u-v+C
	\end{pmatrix}.
	\end{equation}
It is clear to see that $detJ((L_{1},L_{1}+C))=0$. 
	
Let $V=\begin{pmatrix}v_1 & v_2 \end{pmatrix}^T=\begin{pmatrix}
	1 & 1 \end{pmatrix}$ the eigenvector corresponding to the eigenvalue $\Delta=0$ of the matrix $J(L_{1},L_{1}+C)$. In addition, let $U=\begin{pmatrix}
	u_1 & u_2 \end{pmatrix}^T=\begin{pmatrix}
	\dfrac{S(A+M-W+1)}{Q(A-M+W-1)} & 1 \end{pmatrix}$ the eigenvector corresponding to the eigenvalue $\Delta=0$ of the matrix $(J(L_{1},L_{1}+C))^T$.
	
	On the other hand, differentiating the the vector function~\eqref{mhtwwae15}
	with respect to the bifurcation parameter $Q$ we obtain
	\[f_Q(u,v,Q)=\begin{pmatrix}
	-\dfrac{M-W-A+1+2C}{2}\\
	0
	\end{pmatrix}.\]
	Therefore,
	\[Uf_Q(u,v;Q)=-\dfrac{S(A+M-W+1)(2C-A+M-W+1)}{2Q(A-M+W-1)}\neq0.\]
	Now we analyse the expression $U[D^2f(u,v;Q)(V,V)]$ where $V=(v_1,v_2)$ and \[D^2f(u,v;Q)(V,V)=\begin{pmatrix} -2(A-M+2) \\ 0 \end{pmatrix}.\]
	
	Thus,
	\[\begin{aligned}
	U[D^2f(u,v;Q)]=-\dfrac{2S(A-M+2)(A+M-W+1)}{Q(A-M+W-1)}\neq0
	\end{aligned}.\]
	Therefore, by Sotomayor's theorem the system~\eqref{eq3} has a saddle-node bifurcation at $(L_{1},L_{1}+C)$. It also follow that system~\eqref{eq3} undergoes to a saddle-node bifurcation at the equilibrium points $(L_{2},L_{2}+C)$, $(L_{3},L_{3}+C)$ and $(L_{4},L_{4}+C)$.
\end{proof}

The bifurcation curves divide the parameters plane $(Q,S)$ into six parts. Note that system~\eqref{eq3} can have up to three positives equilibrium points. In order to explain the bifurcation $(W,W+C)<P_{1}<P_{2}$ in which $P_1$ is a saddle point; and the equilibrium points $(W,W+C)$ and $P_2$ can be stable or unstable node. Note that there are two more cases when $P_{1}<(W,W+C)<P_{2}$  with $(W,W+C)$ a saddle point and $P_{1}<P_{2}<(W,W+C)$ where $P_{2}$ now is a saddle point. In addition, when parameters lie in the curve $Q=Q^*$ or $Q=Q^{**}$  the equilibrium points $P_{1}$ and $P_{2}$ collapse and thus system~\eqref{eq3} undergoes to a saddle-node bifurcation and a Bogdanov--Takens bifurcation. Moreover, if the parameters $(Q,S)$ are located in Region I, then system~\eqref{eq3} does one positive equilibrium point in the first quadrant which can be stable or unstable. If the parameters $(Q,S)$ are moved to Regions II, III, IV and V, then system~\eqref{eq3} has two equilibrium points $P_{1}$ which is  a saddle point and $P_{2}$ which is unstable when it is located in Region II, stable when it is in Region III and stable surrounded by an unstable limit cycle when it is in Region IV, while if $(Q,S)$ are located in Region V. In Region V the equilibrium point $(W,W+C)$ is a stable node, $P_{1}$ is a saddle point and $P_{2}$ is a stable node. Finally, if the parameters $(Q,S)$ lie in Region VI, then the equilibrium point $(W,W+C)$ is global stable, see Figure~\ref{Fig13}. Furthermore, when the parameters lie in Region VI system~\eqref{eq3} has three positive equilibrium points.

\begin{figure}
\centering
\includegraphics[width=10cm]{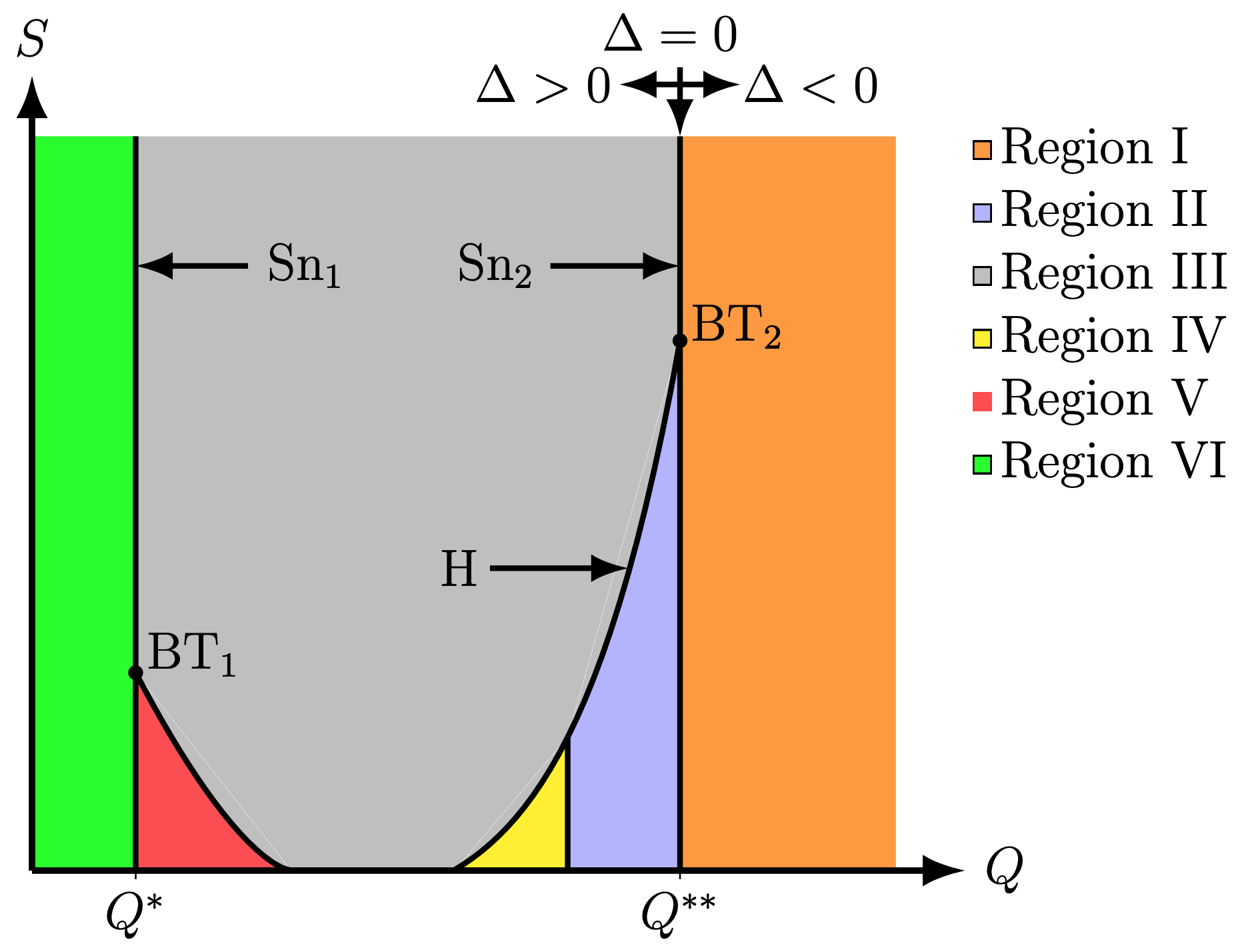}
\caption{The bifurcation diagram of system~\eqref{eq3} with weak Allee effect for $(M,A,C)=(-0.1,0.08,0.19)$ fixed and created with the numerical bifurcation package MATCONT~\cite{matcont}. The curve H represents the Hopf, and Sn$_{1,2}$ represent the saddle-node bifurcation. The point BT$_{1,2}$ represent the Bogdanov--Takens bifurcation.}\label{Fig13}
\end{figure}

\section{Conclusion}\label{S4}

In this manuscript, I study the modified Leslie--Gower predator-prey model with Holling type II functional response, strong (i.e system~\eqref{eq2} with $m>0$) and weak (i.e system~\eqref{eq2} with $m<0$) Allee effect on the prey and a generalist predator. I simplify the analysis by studying a topologically equivalent system~\eqref{eq3} which has four equilibrium points in the axis and up to two positive equilibrium points when a strong Allee effect is included, while system~\eqref{eq3} with weak Allee effect has three equilibrium points in the axis and up to three positive equilibrium points, see Figures~\ref{Fig1} and~\ref{Fig6}. Furthermore, I prove that, when a strong Allee effect is included, system~\eqref{eq3} the equilibrium point $P_1$ is always a saddle point, while $P_2$ can be stable, stable surrounded by an unstable limit cycle or unstable node. Moreover, I also prove that, when a weak Allee effect is included, the equilibrium points $P_1$, $P_2$ and $(W,W+C)$ can be saddle and/or (un)stable points. Besides, when there are three equilibrium points in the first quadrant one of them (the meddle point) is always a saddle point. The stable manifold of the saddle equilibrium point determines a separatrix curve which divides the basins of attraction between the other two equilibrium points. Additionally, I show that system~\eqref{eq3} with weak Allee effect can support a stable limit cycle. 

As the function $\varphi$ is a diffeomorphism preserving the orientation of time, the dynamics of system~\eqref{eq2} is topologically equivalent to system~\eqref{eq3}~\cite{arancibia}. Therefore, I can conclude that when $m>0$ there are conditions in the system parameter for which the predator and prey can coexist or the prey population can extinct. Since the predator population is a generalist specie and thus it avoids extinction by utilising an alternative source of food. Whereas, when $m<0$ there are conditions in the system parameter for which both species can coexist, oscillate or the prey population can extinct when the alternative food ($c$) is bigger than the ratio between the prey intrinsic growth rate ($q$) and the predation rate per capita ($r$) if I assume that the measure of the quality of the prey as food for the predator ($a$) and the Allee threshold ($m<0$) are constant, i.e $c^*<amr/q$.

 I showed that the weak Allee effect and $c^*<amr/q$ in the modified Leslie--Gower model~\eqref{eq2} better represent the dynamics of the original Leslie--Gower predator-prey model studied, for example, by Saez and Gonzalez-Olivares~\cite{saez}. From~\cite{saez}, I can conclude that species in system~\eqref{eq2} could coexist or oscillate but could not extinct. Since there is always one positive equilibrium point which can be stable, or unstable surrounded by a stable limit cycle, or stable surrounded by two limit cycles. 

This manuscript complements the results of the Leslie--Gower model studied by Courchamp \textit{et al}.\ \cite{courchamp2} in which the prey is affected by a density-dependent phenomenon or Allee effect. I showed the impact in the stabilisation, extinction and/or oscillation of the species by considering the Allee effect in the prey for two parameters which are the rescaled intrinsic growth rate of the predator and the predation rate, see Figures~\ref{Fig5} and~\ref{Fig13}. Additionally, I extend the result of Arancibia-Ibarra and Gonz\'alez-Olivares~\cite{arancibia} in which system~\eqref{eq2} was studied partially. I show the impact in the predator and prey interaction by considering a generalist specie and a density-dependent phenomenon together.
 
In summary, the bifurcation diagrams of the modified Leslie--Gower model~\eqref{eq2} with strong Allee effect (see Figure~\ref{Fig5}) and with weak Allee effect (see Figure~\ref{Fig13}) are often qualitatively similar with the bifurcation diagram of the original model~\eqref{eq2} but their solutions behave quantitatively different. In other words, it is observed that the model support equivalent ecological behaviour due to the addition of the modifications into the Leslie--Gower model. That is, a strong Allee effect ($m>0$) and alternative food support coexistence and extinction of the species. In contrast, the modifed model and the model with weak Allee effect ($m<0$) and alternative food does not support the extinction of the species when the density of the alternative food is low.

\bibliographystyle{unsrt}
\bibliography{References}
\end{document}